\newcommand{\comment}[1]{}
\newcommand{\bydef}{\stackrel{\rm def}{=}}
\newcommand{\clb}{\mathcal{B}}
\newcommand{\cld}{\mathcal{D}}
\newcommand{\clh}{\mathcal{H}}
\newcommand{\clk}{\mathcal{K}}
\theoremstyle{theorem}
    \newtheorem{theorem}{Theorem}
    \newtheorem{lemma}[theorem]{Lemma}
    \newtheorem{corollary}[theorem]{Corollary}
\theoremstyle{definition} 
    \newtheorem{definition}[theorem]{Definition}
    \newtheorem{remark}[theorem]{Remark}
    \newtheorem{example}[theorem]{Example}
    \newtheorem{exercise}[theorem]{Exercise}
\def\<{\langle}
\def\>{\rangle}
\def\bar{\overline}
\def\P{{\bf P}}
\newcommand\mnote[1]{} 
\newcommand\be{\begin{equation*}}
\newcommand\ee{\end{equation*}}
\newcommand\ben{\begin{equation}}
\newcommand\een{\end{equation}}
\newcommand\bes{\begin{eqnarray*}}
\newcommand\ees{\end{eqnarray*}}
\newcommand\bex{\begin{exercise}}
\newcommand\eex{\end{exercise}}
\newcommand\beg{\begin{example}}
\newcommand\eeg{\end{example}}
\newcommand\benu{\begin{enumerate}}
\newcommand\eenu{\end{enumerate}}
\newcommand\beit{\begin{itemize}}
\newcommand\eeit{\end{itemize}}
\newcommand\berk{\begin{remark}}
\newcommand\eerk{\end{remark}}
\newcommand\bdefn{\begin{defintion}}
\newcommand\edefn{\end{definition}}
\newcommand\bthm{\begin{theorem}}
\newcommand\ethm{\end{theorem}}
\newcommand\bprf{\begin{proof}}
\newcommand\eprf{\end{proof}}
\newcommand\blem{\begin{lemma}}
\newcommand\elem{\end{lemma}}
\newcommand{\sm}{{\raise0.3ex\hbox{$\scriptstyle \setminus$}}}
\def\CHI{\mathchoice%
{\raise2pt\hbox{$\chi$}}%
{\raise2pt\hbox{$\chi$}}%
{\raise1.3pt\hbox{$\scriptstyle\chi$}}%
{\raise0.8pt\hbox{$\scriptscriptstyle\chi$}}}
\def\smalloplus{\raise1pt\hbox{$\,\scriptstyle \oplus\;$}}
\numberwithin{equation}{section}
\newcommand{\bb}[1]{\mathbb{#1}}
\newcommand{\cl}[1]{\mathcal{#1}}
\begin{document}

\title[Admissible fundamental operators]{Admissible fundamental operators}

\author[Bhattacharyya]{Tirthankar Bhattacharyya}

\address{Department of Mathematics,
        Indian Institute of Science,
        Bangalore 560012, India}
\email{tirtha@math.iisc.ernet.in}

\author[Lata]{Sneh Lata}
\address{Department of Mathematics,
         Shiv Nadar University,
         School of Natural Sciences,
         Gautam Budh Nagar - 203207,
         Uttar Pradesh, India}
\email{sneh.lata@snu.edu.in}

\author[Sau]{Haripada Sau}
\address{Department of Mathematics,
        Indian Institute of Science,
        Bangalore 560012, India}
\email{sau10@math.iisc.ernet.in}

\thanks{MSC2010: Primary: 47A20, 47A25}
\thanks{Key words and phrases: Spectral set, Symmetrized bidisc, $\Gamma$-contraction, Fundamental operator, Admissibile pair, Tetrablock.}
\thanks{This research is supported by Department of Science and
Technology, India through the project numbered SR/S4/MS:766/12 and
University Grants Commission, India via DSA-SAP.}
\date{\today}
\maketitle
\begin{abstract}
Let $F$ and $G$ be two bounded operators on two Hilbert spaces.
Let their numerical radii be no greater than one. This note
investigate when there is a $\Gamma$-contraction $(S,P)$ such that
$F$ is the fundamental operator of $(S,P)$ and $G$ is the
fundamental operator of $(S^*,P^*)$. Theorem 1 puts a necessary condition on $F$ and $G$ for them to be the fundamental operators of $(S,P)$ and  $(S^*,P^*)$ respectively. Theorem 2 shows that this necessary condition is sufficient too provided we restrict our attention to a certain special case. The general case is investigated in Theorem 3. Some of the results obtained
for $\Gamma$-contractions are then applied to tetrablock
contractions to figure out when two pairs $(F_1, F_2)$ and $(G_1,
G_2)$ acting on two Hilbert spaces can be fundamental operators of
a tetrablock contraction $(A, B, P)$ and its adjoint $(A^*, B^*,
P^*)$ respectively. This is the content of Theorem 4.
\end{abstract}
\section{Introduction}

A pair of commuting bounded operators $(S,P)$ on a Hilbert space $\clh$ having the symmetrized bidisc
$$ \Gamma = \{ (z_1 + z_2 , z_1 z_2): |z_1|, |z_2| \le 1\} = \{ \beta + \overline{\beta}p : |p| \le 1, |\beta| \le 1\}$$
as a spectral set possesses a fundamental operator $F$. Such an $(S,P)$ is called a $\Gamma$-contraction. The study of $\Gamma$-contractions was introduced and carried out very successfully over several papers by Agler and Young, see \cite{j.o.t} and the references therein.
The second component $P$ is a contraction. Let $D_P = (I - P^*P)^{1/2}$ and $\cld_P = \overline{Ran} D_P$.
The fundamental operator is the unique bounded operator on $\cld_P$ that satisfies the fundamental equation
$$ S - S^*P = D_P F D_P.$$
It has numerical radius $w(F)$ no greater than one. The fundamental operator of a $\Gamma$-contraction was introduced
in \cite{Sourav da's 1st}. The discovery of the fundamental operator of a $\Gamma$-contraction put a spurt in the activities around it. In particular, we would like to mention Sarkar's work \cite{JS} which made a significant contribution to the understanding of $\Gamma$-contractions.

The pair $(S^*, P^*)$ is also a $\Gamma$-contraction. Thus it has its own
fundamental operator $G \in \clb(\cld_{P^*})$ with $w(G) \le 1$.
 Note how both $F$ and $G$ feature in the following
explicit construction of a boundary normal dilation. The
distinguished boundary of the symmetrized bidisc is
$$ b \Gamma = \{ (z_1 + z_2 , z_1 z_2): |z_1|, |z_2| = 1\}.$$
A boundary normal dilation of a $\Gamma$-contraction $(S, P)$ is a
pair of commuting normal operators $(R, U)$ on a Hilbert space
$\clk$ containing $\clh$ such that $(R, U)$ is a $dilation$ of the
given pair $(S, P)$ and $\sigma(R, U)$, the joint spectrum is
contained in the distinguished boundary $b\Gamma$. $Dilation$
means that
$$P_{\clh} R^m U^n |_{\clh} = S^m P^n.$$
Such a pair $(R, U)$ is also called a $\Gamma$-unitary. The
following construction, done by two of the authors of the present paper in \cite{sir and me} and independently
by Pal in \cite{Sourav's single paper}, is one of the very few explicit
constructions of dilations known, the only other ones being Schaeffer's
construction of the minimal unitary dilation of a contraction in
\cite{Schaeffer} and Ando's construction of a commuting unitary
dilation of a pair of commuting bounded operators in \cite{Ando}.

\noindent \textbf{Known Theorem.} Let $(S,P)$ be
 a $\Gamma$-contraction. Let $F$ and $G$ be the fundamental
 operators of $(S,P)$ and $(S^*, P^*)$ respectively. Consider
the space  $\clk$ defined as
$$ \clk = \cdots\oplus\mathcal D_P\oplus\mathcal
D_P\oplus\mathcal D_P\oplus\mathcal H\oplus \mathcal
D_{P^*}\oplus\mathcal D_{P^*}\oplus\mathcal D_{P^*}\oplus\cdots .$$
Let $R$ and $U$ be defined on $\clk$ as follows.

\begin{equation}\label{R}
R =\left[
\begin{array}{ c c c c|c|c c c c}
\bm{\ddots}&\vdots &\vdots&\vdots   &\vdots  &\vdots& \vdots&\vdots&\vdots\\
\cdots&F&F^*&0  &0&  0&0&0&\cdots\\
\cdots&0&F&F^*  &0&  0&0&0&\cdots\\
\cdots&0&0&F  &F^*D_P&  -F^*P^*&0&0&\cdots\\ \hline

\cdots&0&0&0   &S&   D_{P^*}G&0&0&\cdots\\ \hline

\cdots&0&0&0   &0&  G^*& G&0&\cdots\\
\cdots&0&0&0   &0&  0&G^*&G&\cdots\\
\cdots&0&0&0  &0&   0& 0&G^*&\cdots\\
\vdots&\vdots&\vdots&\vdots&\vdots&\vdots&\vdots&\vdots&\bm{\ddots}\\
\end{array} \right],
\end{equation}
\begin{equation} \label{U} U = \left[
\begin{array}{ c c c c|c|c c c c}
\bm{\ddots}&\vdots &\vdots&\vdots   &\vdots  &\vdots& \vdots&\vdots&\vdots\\
\cdots&0&I&0  &0&  0&0&0&\cdots\\
\cdots&0&0&I  &0&  0&0&0&\cdots\\
\cdots&0&0&0  &D_P&  -P^*&0&0&\cdots\\ \hline

\cdots&0&0&0   &P&   D_{P^*}&0&0&\cdots\\ \hline

\cdots&0&0&0   &0&  0& I&0&\cdots\\
\cdots&0&0&0   &0&  0&0&I&\cdots\\
\cdots&0&0&0  &0&   0& 0&0&\cdots\\
\vdots&\vdots&\vdots&\vdots&\vdots&\vdots&\vdots&\vdots&\bm{\ddots}\\
\end{array} \right].
\end{equation}
 Then the pair $(R, U)$ is a $\Gamma$-unitary dilation of $(S,
 P)$.

 This shows that it is of interest to know which pair of operators $F$ and $G$, defined on different Hilbert spaces in general, satisfying $w(F) \le 1$ and $w(G) \le 1$, qualify as fundamental operators. In other words, does there always exist a $\Gamma$-contraction $(S,P)$ such that $F$ is the fundamental operator of $(S,P)$ and $G$ is the fundamental operator of $(S^*,P^*)$? In this note, our first result says that if there is such an $(S,P)$, then it forces a relation between $F$, $G$ and $P$.

For a contraction $P$ on a Hilbert space $\cl{H}$, define
$$
\Theta_P(z)=[-P+zD_{P^*}(I_\mathcal{H}-zP^*)^{-1}D_P]|_{\mathcal{D}_P} \text{ for all $z \in \mathbb{D}$}.
$$
The function $\Theta_P$ is called the {\em{characteristic function}} of the contraction $P$. By virtue of the relation $PD_P=D_{P^*}P$(see ch.1, sec.3 of \cite{Nagy-Foias}), it follows that each $\Theta_P(z)$ is an operator from $\cl{D}_P$ into $\cl{D}_{P^*}$. The characteristic function induces an operator $M_{\Theta_P}$ in $\cl{B}(H^2_{\cl{D}_P}(\mathbb{D}),H^2_{\cl{D}_{P^*}}(\mathbb{D}))$ defined by
$$
M_{\Theta_P}f(z)=\Theta_P(z)f(z) \text{ for all $z \in \mathbb{D}$}
.
$$

\begin{theorem}\label{genthm}
Let $(S,P)$ on a Hilbert space $\mathcal{H}$ be a $\Gamma$-contraction and $F,G$ be the fundamental operators of $(S,P)$ and $(S^*,P^*)$ respectively. Then
\begin{eqnarray}\label{adeq1}
{\Theta_P}(z) (F+F^*z) = (G^*+Gz) {\Theta_P}(z)
\end{eqnarray}
holds, where $\Theta_P$ is characteristic function of $P$.
\end{theorem}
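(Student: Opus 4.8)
The plan is to argue inside the explicit $\Gamma$-unitary dilation $(R,U)$ on $\clk$ supplied by the Known Theorem, using the fact that the second component $U$ there is nothing but a copy of the minimal unitary dilation of $P$, whose internal geometry is precisely what the characteristic function $\Theta_P$ records. First I would isolate the two ``tails'' of $\clk$: let $\clk_-$ be the sum of the copies of $\cld_P$ lying strictly to the left of the $\clh$-slot, and $\clk_+$ the sum of the copies of $\cld_{P^*}$ lying strictly to the right. A direct inspection of the block matrices \eqref{R} and \eqref{U} shows that $\clk_-$ is invariant for both $U$ and $R$, and that under the natural identification $\clk_-\cong H^2_{\cld_P}(\D)$ (the $\cld_P$-entry in the $k$-th slot left of $\clh$ becoming the coefficient of $z^{k-1}$) one gets $U|_{\clk_-}=M_z$ and $R|_{\clk_-}=M_{F+zF^*}$; symmetrically $\clk_+$ is invariant for $U^*$ and $R^*$, and $\clk_+\cong H^2_{\cld_{P^*}}(\D)$ with $U^*|_{\clk_+}=M_z$ and $R^*|_{\clk_+}=M_{G+zG^*}$. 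One checks moreover that $\clk=\overline{\operatorname{span}}\{U^n\clh:n\in\Z\}$, so $(\clk,U)$ is exactly the minimal unitary dilation of $P$ (and for the identity \eqref{adeq1} one may assume $P$ completely non-unitary).

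Next I would invoke the Sz.-Nagy--Foias functional model \cite{Nagy-Foias}. Identify $\clk$ with $L^2_{\cld_{P^*}}(\mathbb T)$ through the outgoing Fourier unitary $\Phi$, normalised so that $\Phi U\Phi^*=M_z$ and $\Phi$ sends the slots $\le 0$ onto the Hardy space $H^2_{\cld_{P^*}}$. Then $\Phi$ sends $\clh$ onto the model space $H^2_{\cld_{P^*}}\ominus\Theta_PH^2_{\cld_P}$ and $\clk_-$ onto $\Theta_PH^2_{\cld_P}$, the restriction $\Phi|_{\clk_-}$ being, in the coordinates of the previous step, multiplication by $\Theta_P$. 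Since $R$ commutes with $U$, the operator $\Phi R\Phi^*$ commutes with $M_z$ on $L^2_{\cld_{P^*}}$, hence equals $M_\psi$ for some $\psi\in L^\infty(\mathbb T,\clb(\cld_{P^*}))$; and reading off the action on $\Phi(\clk_+)$ from $R^*|_{\clk_+}=M_{G+zG^*}$ (after matching the analytic and co-analytic pictures of that tail) identifies this symbol as $\psi(z)=G^*+zG$.

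Finally I would intertwine. For $h\in H^2_{\cld_P}$ let $x\in\clk_-$ be the corresponding vector; on the one hand $\Phi Rx = M_\psi\Phi x = (G^*+zG)\,\Theta_P h$, while on the other hand $\Phi Rx=\Theta_P(F+zF^*)h$, since $Rx$ corresponds under $\clk_-\cong H^2_{\cld_P}$ to $(F+zF^*)h$ and $\Phi|_{\clk_-}$ is multiplication by $\Theta_P$. Hence $(G^*+zG)\Theta_P h=\Theta_P(F+zF^*)h$ for every $h\in H^2_{\cld_P}$; taking $h$ a constant and then letting it vary over $\cld_P$ gives $(G^*+zG)\Theta_P(z)=\Theta_P(z)(F+zF^*)$ for a.e.\ $z\in\mathbb T$, and hence for all $z\in\D$ by analyticity, which is \eqref{adeq1}.

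The main obstacle is the bookkeeping in the middle step: one must pin down the orientations of the two tails and the passage between the analytic and co-analytic descriptions so that the change of variables is $M_{\Theta_P}$ itself (rather than its transpose, its adjoint, or a $z\mapsto\bar z$ reflection) and so that the outgoing symbol of $R$ is $G^*+zG$ rather than $G+zG^*$. A clean way to fix all normalisations is to test \eqref{adeq1} at $z=0$, where $\Theta_P(0)=-P|_{\cld_P}$ and the identity collapses to $PF=G^*P$ on $\cld_P$; this last relation is checked directly by applying $R=R^*U$ (valid since $(R,U)$ is a $\Gamma$-unitary) to a vector of $\clh$ in the $0$-th slot and reading the component in the first $\cld_{P^*}$-slot, using $PD_P=D_{P^*}P$. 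Alternatively, and avoiding the model altogether, one can expand both sides of \eqref{adeq1} into power series in $z$ --- reducing it to a family of operator identities among $F$, $G$, $P$, the simplest of which is $PF=G^*P$ --- and derive each of them from $R=R^*U$ by applying both sides to suitable vectors of $\clk$; the effort is then merely transferred onto those identities, which amounts to the same bookkeeping.
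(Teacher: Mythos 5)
There is a genuine gap, and it sits exactly in the step you yourself flag as ``bookkeeping''. Your main route identifies the dilation space $\clk$ of the Known Theorem with $L^2_{\cld_{P^*}}(\mathbb T)$, sends $\clh$ to $H^2_{\cld_{P^*}}\ominus\Theta_P H^2_{\cld_P}$ and the left tail $\clk_-$ to $\Theta_P H^2_{\cld_P}$ via multiplication by $\Theta_P$. That picture of the Sz.-Nagy--Foias model is valid only when $\Theta_P$ is inner, i.e.\ when $P$ is pure ($C_{\cdot 0}$); but Theorem \ref{genthm} carries no such hypothesis, and reducing to the completely non-unitary case (which itself needs the fact that the unitary part of $P$ reduces $S$) is not enough. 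For a general c.n.u.\ contraction the minimal unitary dilation is realized on $L^2_{\cld_{P^*}}(\mathbb T)\oplus\overline{\Delta_P L^2_{\cld_P}(\mathbb T)}$, the incoming subspace $\clk_-$ is carried onto $\{\Theta_P w\oplus\Delta_P w: w\in H^2_{\cld_P}(\D)\}$ rather than onto $\Theta_P H^2_{\cld_P}$, $\clh$ is not $H^2\ominus\Theta_P H^2$, and the commutant of $M_{e^{it}}\oplus M_{e^{it}}$ contains operators with nonzero off-diagonal entries, so ``$\Phi R\Phi^*$ commutes with $M_z$, hence is $M_{G^*+zG}$'' breaks down: what comes out is an intertwining in which $\Delta_P$ cannot be discarded. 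This is precisely the complication the paper isolates in Theorem \ref{genthm2}, where the correct statement in the non-pure case couples $M_{\Theta_P}$ with $\Delta_P$ (Equation (\ref{eqn1})) and its proof occupies Section 3 plus the Appendix; your argument silently assumes the residual part away.

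Your fallback (``expand both sides in powers of $z$ and verify the coefficient identities'') is in fact the paper's actual proof, but you do not carry it out: the whole content of the theorem is exactly those identities, namely $PF=G^*P|_{\cld_P}$, $D_{P^*}D_PF-PF^*=G^*D_{P^*}D_P-GP|_{\cld_P}$ (Lemma \ref{adlem2}) together with $D_PS=FD_P+F^*D_PP$ and its adjoint analogue $D_{P^*}S^*=GD_{P^*}+G^*D_{P^*}P^*$ (Lemma \ref{adlem1}) for the coefficients of $z^n$, $n\ge 2$, and the paper proves the theorem directly from them without any dilation or model theory. Asserting that these follow ``from $R=R^*U$ by applying both sides to suitable vectors'' leaves the real work undone, and is moreover uncomfortably close to circular: the verification that the matrices (\ref{R}) and (\ref{U}) define a $\Gamma$-unitary dilation in \cite{sir and me} uses exactly these relations between $F$, $G$ and $P$. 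As it stands, your proposal proves the statement only under the additional hypothesis that $P$ is pure, where it is a correct but much heavier alternative to the paper's three-line power-series computation.
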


Since the theorem above gives a necessary condition, it is natural to ask about sufficiency. A contraction $P$ is called $pure$ if $P^{*n}$ strongly converges to $0$ as $n$ goes to infinity. This is Arveson's terminology, see \cite{Arveson}. Sz.-Nagy and Foias called it a $C_{.0}$ contraction. The unilateral shift is a pure contraction. So are its compressions to all co-invariant subspaces.

A $\Gamma$-contraction $(S,P)$ is called pure if the contraction $P$ is pure.

\begin{theorem}\label{genthm1}
Let $P$ be a pure contraction on a Hilbert space $\mathcal{H}$. Let $F \in \mathcal{B}(\mathcal{D}_P)$ and $G \in \mathcal{B}(\mathcal{D}_{P^*})$ be two operators with numerical radius not greater than one. If (\ref{adeq1})
holds, then there exists an operator $S$ on $\mathcal{H}$ such that $(S,P)$ is a $\Gamma$-contraction and $F$,$G$ are fundamental operators of $(S,P)$ and $(S^*,P^*)$ respectively.
\end{theorem}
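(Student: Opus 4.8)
The plan is to realise $S$ explicitly inside the Sz.-Nagy--Foias functional model of the pure contraction $P$ and then read off its fundamental operators from that model.

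\emph{Step 1 (the model and the definition of $S$).} Since $P$ is pure it is completely non‑unitary (indeed $C_{.0}$), so its characteristic function $\Theta_P$ is inner, and there is a unitary $\Pi\colon\mathcal H\to\mathcal Q$, where $\mathcal Q:=H^2_{\mathcal D_{P^*}}(\mathbb D)\ominus M_{\Theta_P}H^2_{\mathcal D_P}(\mathbb D)$, intertwining $P$ with $P_{\mathcal Q}M_z|_{\mathcal Q}$ (equivalently $\Pi P^*=M_z^*\Pi$) and satisfying $(\Pi x)(0)=D_{P^*}x$ for all $x\in\mathcal H$. Put $\varphi(z)=G^*+zG$ and $\psi(z)=F+zF^*$; these are bounded multipliers because $w(F),w(G)\le1$. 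Relation $(\ref{adeq1})$ says precisely $\varphi(z)\Theta_P(z)=\Theta_P(z)\psi(z)$, i.e.\ $M_\varphi M_{\Theta_P}=M_{\Theta_P}M_\psi$, so $M_{\Theta_P}H^2_{\mathcal D_P}$ is $M_\varphi$‑invariant (it is trivially $M_z$‑invariant); hence $\mathcal Q$ is co‑invariant for the commuting pair $(M_\varphi,M_z)$. Define $S\in\mathcal B(\mathcal H)$ by $\Pi S\Pi^*=P_{\mathcal Q}M_\varphi|_{\mathcal Q}$. Since $\mathcal Q^\perp$ is jointly invariant for $(M_\varphi,M_z)$, the compressions multiply correctly on $\mathcal Q$, and in particular $SP=PS$.

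\emph{Step 2 ($(S,P)$ is a pure $\Gamma$‑contraction).} Since $w(G)\le1$ one checks $M_\varphi=M_\varphi^*M_z$ and $\|M_\varphi\|=\sup_{|z|=1}\|G^*+zG\|=2w(G)\le2$, so $(M_\varphi,M_z)$ on $H^2_{\mathcal D_{P^*}}$ is a $\Gamma$‑isometry (the standard $\Gamma$‑isometry over the shift attached to $G$, by Agler--Young); in particular $\Gamma$ is a spectral set for it. For every two‑variable polynomial $q$, co‑invariance of $\mathcal Q$ gives $q\big(\Pi S\Pi^*,\Pi P\Pi^*\big)=P_{\mathcal Q}\,q(M_\varphi,M_z)|_{\mathcal Q}$, whence $\|q(S,P)\|\le\|q(M_\varphi,M_z)\|\le\|q\|_\Gamma$. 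So $\Gamma$ is a spectral set for $(S,P)$, and $(S,P)$ is pure because $P$ is.

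\emph{Step 3 (the fundamental operators).} Write $\widetilde S=\Pi S\Pi^*$, $\widetilde P=\Pi P\Pi^*$, and let $\Delta$ be the defect operator of $\widetilde P^*$, so that $\Pi D_{P^*}=\Delta\Pi$. A short computation on $H^2_{\mathcal D_{P^*}}$ gives
\[
M_\varphi^*-M_\varphi M_z^*=G\,\mathrm{ev}_0,\qquad \Delta^2=P_{\mathcal Q}\,\mathrm{ev}_0|_{\mathcal Q},
\]
where $\mathrm{ev}_0$ sends a function to its value at $0$ (as a constant). Because $\mathcal Q$ is invariant under $M_z^*$ and $M_\varphi^*$, the left side of the fundamental equation for $(\widetilde S^*,\widetilde P^*)$ is $P_{\mathcal Q}(M_\varphi^*-M_\varphi M_z^*)|_{\mathcal Q}=JGJ^*$, where $J:=P_{\mathcal Q}\iota$ and $\iota\colon\mathcal D_{P^*}\to H^2_{\mathcal D_{P^*}}$ is the inclusion as constants; and the second identity reads $\Delta^2=JJ^*$. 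The relation $(\Pi x)(0)=D_{P^*}x$ forces $J=\Pi D_{P^*}|_{\mathcal D_{P^*}}=\Delta\,u$ with $u:=\Pi|_{\mathcal D_{P^*}}$ a unitary of $\mathcal D_{P^*}$ onto $\overline{\mathrm{Ran}}\,\Delta$. Hence the fundamental equation becomes $\Delta(uGu^*)\Delta=\Delta F_*\Delta$, where $F_*$ is the fundamental operator of $(S^*,P^*)$ transported to the model; cancelling $\Delta$ (dense range) and conjugating back by $u$ shows that the fundamental operator of $(S^*,P^*)$ equals $G$. Finally, Theorem~\ref{genthm} applied to $(S,P)$ yields $\Theta_P(z)(F_1+F_1^*z)=(G^*+Gz)\Theta_P(z)$ for the fundamental operator $F_1$ of $(S,P)$; subtracting $(\ref{adeq1})$ gives $\Theta_P(z)\big((F_1-F)+(F_1-F)^*z\big)=0$ for all $z$, and since $\Theta_P$ is inner the multiplier $M_{\Theta_P}$ is injective, forcing $F_1=F$.

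\emph{Main obstacle.} Steps 1 and 2 are routine once the functional model and the description of $\Gamma$‑isometries are in hand. The real work is Step 3: getting the boundary‑term identity $M_\varphi^*-M_\varphi M_z^*=G\,\mathrm{ev}_0$ exactly right, identifying $\Delta$, and—most delicately—routing everything through $\Pi$ so that the factor $\Delta$ can legitimately be cancelled; here one must keep careful track of the fact that in the model $\mathcal D_{P^*}$ enters as the coefficient space of $H^2_{\mathcal D_{P^*}}$ compressed into $\mathcal Q$, not as a literal subspace of $\mathcal Q$.
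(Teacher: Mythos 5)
Your proposal is correct, and at its core it is the paper's own construction: your model map $\Pi$ is precisely the paper's isometry $Wh=\sum_{n\ge 0} z^n\otimes D_{P^*}P^{*n}h$, whose range is $(\operatorname{Ran}M_{\Theta_P})^{\perp}$ by Lemma \ref{L0}, your $S$ is the paper's $S=W^*M_{G^*+Gz}W$, and your identification of $F$ (Theorem \ref{genthm} applied to $(S,P)$ plus the fact that $M_{\Theta_P}$ is an isometry when $P$ is pure) is exactly the paper's final step. You deviate in two places, both legitimately. First, to see that $(S,P)$ is a $\Gamma$-contraction, the paper computes $\lVert S\rVert\le 2$ and $S^*-SP^*=W^*(P_{\mathbb C}\otimes G)W=D_{P^*}GD_{P^*}$ and then invokes the criterion from \cite{Sourav da's 1st} (a commuting pair with the right norm bound and a numerical-radius-one solution of the fundamental equation is a $\Gamma$-contraction); you instead use that $(M_{G^*+zG},M_z)$ is a $\Gamma$-isometry by Agler--Young and push the von Neumann inequality through the jointly co-invariant subspace $\mathcal Q$, which is a bit more self-contained but rests on the same Agler--Young model input. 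Second, for the fundamental operator of $(S^*,P^*)$ the paper's direct computation with $W$ yields $S^*-SP^*=D_{P^*}GD_{P^*}$ on $\mathcal H$ in one line, whereas you derive the same identity inside the model and transport it back through the defect operator $\Delta$ and the unitary $u=\Pi|_{\mathcal D_{P^*}}$; this is correct (cancelling $\Delta$ is justified because $\Delta$ restricted to $\overline{\operatorname{Ran}}\,\Delta$ is injective with dense range, or one can simply appeal to uniqueness of the fundamental operator), only heavier bookkeeping than the paper needs. In short: same construction and same key identities, with an alternative, equally valid justification of $\Gamma$-contractivity.
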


A contraction $P$ is called {\em{completely-non-unitary}} if it has no reducing subspaces on which its restriction is unitary.

A $\Gamma$-contraction $(S,P)$ is called completely-non-unitary if the contraction $P$ is completely-non-unitary.

Sufficiency in the situation when $P$ is not pure is more
complicated. We state it here although a couple of notations
depend on the background developed in Section 3, where the details
are given.

\begin{theorem}\label{genthm2} Let $(S,P)$ be a c.n.u. $\Gamma$-contraction on a Hilbert space $\cl H$ such that
$R=M_{e^{it}+I}$ in the representation (\ref{gammacont}) of $S$. Then
\begin{eqnarray}\label{eqn1}
\left(
\begin{array}{cc}
M_{G^*+zG} & 0\\
0 & M_{e^{it}+I}
\end{array}
\right)
\left(
\begin{array}{c}
M_{\Theta_P}\\
\Delta_P
\end{array}
\right)
&=&
\left(
\begin{array}{c}
M_{\Theta_P}\\
\Delta_P
\end{array}
\right)
M_{F+zF^*},
\end{eqnarray}
where $F\in \cl{B}(\cl D_P), \ G\in \cl{B}(\cl D_{P^*})$ are the fundamental operators
for $(S,P)$ and $(S^*,P^*)$ respectively. Moreover, if $V_1$ is as in (\ref{lastnamed}), then
\begin{eqnarray}\label{eqn2}
\left(
\begin{array}{cc}
M_{G^*+zG} & 0\\
0 & M_{e^{it}+I}
\end{array}
\right)
\left(
\begin{array}{c}
M_{V_1} M_{\Theta_P}\\
\Delta_P
\end{array}
\right)
&=&
\left(
\begin{array}{c}
M_{V_1} M_{\Theta_P}\\
\Delta_P
\end{array}
\right)
M_{Y+zY^*}\label{eqn2}\text{  holds,}
\end{eqnarray}
for some $Y\in \cl{B}(\cl D_P)$ with
$w(Y)\le 1.$

Conversely, if $P$ is a c.n.u. contraction on $\cl H$ and $F, Y\in \cl{B}(\cl D_P)$ with $w(F)\le 1, w(Y)\le 1$
and $G\in \cl{B}(\cl D_{P^*})$ with $w(G)\le 1,$ satisfy the
Equations (\ref{eqn1}) and (\ref{eqn2}), then there exists
$S\in \cl{B}(\cl H)$ so that $(S,P)$ is a c.n.u. $\Gamma$- contraction, $F$ is the fundamental operator for $(S,P)$ and $G$ is
the fundamental operator for $(S^*,P^*).$
\end{theorem}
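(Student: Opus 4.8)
The plan is to exploit the explicit dilation machinery recorded in the Known Theorem together with the model theory of the c.n.u. contraction $P$. For the forward direction, start from a c.n.u. $\Gamma$-contraction $(S,P)$ with fundamental operators $F,G$. The first step is to write down the functional model of $P$: since $P$ is c.n.u., it is unitarily equivalent to the compression of the shift to $H^2_{\cl D_{P^*}}\ominus M_{\Theta_P}H^2_{\cl D_P}$ together with a residual part governed by the defect $\Delta_P(e^{it}) = (I-\Theta_P(e^{it})^*\Theta_P(e^{it}))^{1/2}$; this is why the column vector $(M_{\Theta_P},\Delta_P)^{\mathrm t}$ appears in \eqref{eqn1}. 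The content of \eqref{eqn1} is then: the fundamental equation $S-S^*P = D_PFD_P$, rewritten on the model space, says exactly that the pencil $M_{F+zF^*}$ on the $\cl D_P$-side is intertwined, through the model isometry, with the pencil $M_{G^*+zG}\oplus M_{e^{it}+I}$ on the $\cl D_{P^*}\oplus(\text{residual})$ side — the block $M_{e^{it}+I}$ being the action of $R$ (equivalently of $S$) on the residual summand, which is the hypothesis $R = M_{e^{it}+I}$. So Step 1 is to transcribe the fundamental equations for $(S,P)$ and $(S^*,P^*)$ into the model and read off \eqref{eqn1}; the analytic-function identity \eqref{adeq1} of Theorem 1 is the ``$H^2_{\cl D_P}$-part'' of this intertwining, and \eqref{eqn1} is its completion by the residual part. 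Step 2 produces \eqref{eqn2}: the map $V_1$ of \eqref{lastnamed} is an isometry relating the given model of $P$ to the model attached to the dilation space $\clk$ of the Known Theorem, and pushing \eqref{eqn1} through $V_1$ yields an intertwining of the same outer pencil with a new pencil $M_{Y+zY^*}$, where $Y\in\cl B(\cl D_P)$ is the fundamental operator of the $\Gamma$-contraction obtained by compressing $(R,U)$ to the appropriate co-invariant subspace; the numerical-radius bound $w(Y)\le 1$ is automatic because $Y$ is a genuine fundamental operator (by the Known Theorem / the result of \cite{Sourav da's 1st}).

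For the converse, suppose $P$ is c.n.u. and $F,Y\in\cl B(\cl D_P)$, $G\in\cl B(\cl D_{P^*})$ with the stated numerical-radius bounds satisfy \eqref{eqn1} and \eqref{eqn2}. The strategy is to \emph{define} $S$ by reverse-engineering the operator $R$ of \eqref{R}. Concretely: build the bilateral space $\clk = \cdots\oplus\cl D_P\oplus\cl H\oplus\cl D_{P^*}\oplus\cdots$, put $U$ as in \eqref{U} (this uses only $P$, which we have), and attempt to put $R$ as in \eqref{R} using the given $F,G$ in the off-diagonal shift blocks; then set $S := P_{\cl H}R|_{\cl H}$, i.e. $S$ is the central block, which must be determined so that the fundamental equations hold. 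The point of \eqref{eqn1} and \eqref{eqn2} is precisely that they are the compatibility conditions guaranteeing (i) that such an $R$ commutes with $U$, and (ii) that $(R,U)$ has joint spectrum in $b\Gamma$, i.e. is a $\Gamma$-unitary; once $(R,U)$ is a $\Gamma$-unitary dilating the commuting pair $(S,P)$ with the prescribed corner structure, $(S,P)$ is a $\Gamma$-contraction and the off-diagonal blocks of $R$ force $F$ to be its fundamental operator and $G$ that of $(S^*,P^*)$. The extra data $Y$ and equation \eqref{eqn2} are what is needed to pin down the residual/unitary-free bookkeeping that \eqref{eqn1} alone leaves underdetermined when $P$ is not pure — in the pure case $\Delta_P=0$, \eqref{eqn1} collapses to \eqref{adeq1}, $Y$ disappears, and one recovers Theorem 2.

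The main obstacle, and the step I would spend the most care on, is the converse's verification that the intertwining relations \eqref{eqn1}–\eqref{eqn2} are \emph{exactly} what is needed — no more, no less — to realize the abstract model-space data as honest operators on $\cl H$. That is, one must show: given the two intertwiners, there is a genuine contraction-valued analytic structure and a genuine operator $S\in\cl B(\cl H)$ (not merely on an auxiliary model space) whose characteristic-function/model data match. This requires (a) checking that the operator built blockwise is bounded and that the two pencils $M_{F+zF^*}$, $M_{G^*+zG}$ genuinely have the right spectral radius behaviour on the circle — here $w(F)\le1$, $w(G)\le1$, $w(Y)\le1$ enter to guarantee $\|F+e^{it}F^*\|\le 2$ etc., so the resulting $R$ is a $\Gamma$-unitary rather than merely a $\Gamma$-contraction; and (b) verifying that the model space of $P$ with the extra intertwiners is \emph{reducible} to a representation of the form \eqref{gammacont} with $R=M_{e^{it}+I}$, which is where the c.n.u. hypothesis and a Sz.-Nagy–Foias model uniqueness argument are indispensable. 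The pure case (Theorem 2) is the clean prototype; the work here is to carry the residual summand $\Delta_P$ through every step without losing the $b\Gamma$-spectrum condition, and \eqref{eqn2} with its auxiliary $Y$ is the device that makes this bookkeeping close.
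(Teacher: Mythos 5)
Your outline identifies the right framework (the Sz.-Nagy--Foias model of the c.n.u.\ contraction $P$, with the column $\left(\begin{smallmatrix} M_{\Theta_P}\\ \Delta_P\end{smallmatrix}\right)$ and the residual part), but in the forward direction it skips the actual content of (\ref{eqn1}). The $M_{\Theta_P}$-component is indeed Theorem \ref{genthm} (i.e.\ (\ref{adeq1})), but the residual component $\Delta_P M_{F+zF^*}=M_{e^{it}+I}\Delta_P$ does not ``read off'' from transcribing the fundamental equations: it is proved in the paper by expanding $\Delta_P^2 M_{e^{it}+I}$ and $\Delta_P^2 M_{F+zF^*}$ into Fourier coefficients and matching them term by term, and the matching needs the identity $P_\infty^2+PP_\infty^2-PP_\infty^2S^*=0$, which is exactly where the hypothesis $R=M_{e^{it}+I}$ in (\ref{gammacont}) is used. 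Your proposal treats that hypothesis as mere bookkeeping and supplies no substitute for this computation. Similarly, (\ref{eqn2}) is not obtained by ``pushing (\ref{eqn1}) through $V_1$,'' and $Y$ is not a fundamental operator of a compression of the Known Theorem's $(R,U)$; in the paper it comes from the invariance of $\cl S_P$ under the block operator (equivalently co-invariance of $\Pi(\cl H)$, from (\ref{gammacont})), after which Lemma \ref{Bresult} shows the induced operator $T$ on $H^2_{\cl D_P}(\bb D)$ makes $(T,M_z)$ a $\Gamma$-isometry, and the Agler--Young structure theorem forces $T=M_{Y+zY^*}$ with $w(Y)\le 1$. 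That argument, or an equivalent one, is missing.

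The converse as you propose it is circular: the matrix $R$ in (\ref{R}) has $S$ itself as its central block (and blocks $F^*D_P$, $-F^*P^*$, $D_{P^*}G$ whose compatibility with $U$ rests on identities such as $D_PS=FD_P+F^*D_PP$, which presuppose $S$), so one cannot ``build $R$ first and then set $S:=P_{\cl H}R|_{\cl H}$''; no mechanism is given for producing an operator on $\cl H$ from the data $F,G,Y$, and the claim that (\ref{eqn1})--(\ref{eqn2}) are exactly the conditions for $RU=UR$ and for joint spectrum in $b\Gamma$ is asserted, not proved. The paper's route is different and direct: define $S:=\Pi^*\left(\begin{smallmatrix} M_{X^*+zX} & 0\\ 0 & M_{e^{it}+I}\end{smallmatrix}\right)\Pi$ with $X=V_1^*GV_1$ and $V_1$ from (\ref{lastnamed}); use (\ref{eqn2}) to see that $\Pi(\cl H)$ is co-invariant, hence $SP=PS$ and $\|S\|\le 2$; compute $S^*-SP^*=D_{P^*}GD_{P^*}$ so that $(S,P)$ is a $\Gamma$-contraction with $G$ the fundamental operator of $(S^*,P^*)$; and finally apply the already-proved forward direction to this $(S,P)$ and use that $\left(\begin{smallmatrix} M_{\Theta_P}\\ \Delta_P\end{smallmatrix}\right)$ is an isometry, together with the hypothesis (\ref{eqn1}), to conclude that the fundamental operator of $(S,P)$ equals $F$. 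As it stands, your proposal would not close either direction without these missing arguments.
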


In the last section, we discuss about when two pairs of operators can be fundamental operators of a tetrablock contraction and its adjoint. The set {\bf{tetrablock}} is defined by
$$
E = \{\underline{x}=(x_1,x_2,x_3)\in \mathbb{C}^3:
 1-x_1z-x_2w+x_3zw \neq 0 \text{ whenever }|z| < 1\text{ and }|w| < 1 \}
$$
See \cite{awy} and \cite{awy-cor} to study the geometric properties of the domain.
A commuting triple of operators $(A,B,P)$ on a Hilbert space $\mathcal{H}$ is called a tetrablock contraction if $\bar{E}$ is a spectral set. Like $\Gamma$-contractions, tetrablock contractions also possess fundamental operators and these are introduced in \cite{sir's tetrablock paper}.
Fundamental equations for a tetrablock contraction are
\begin{eqnarray}\label{Maa12}
A-B^*P=D_PF_1D_P, \text{ and }  B-A^*P=D_PF_2D_P,
\end{eqnarray}
where $D_P=(I-P^*P)^\frac{1}{2}$ is the defect operator of the contraction $P$ and $\mathcal{D}_P=\overline{Ran}D_P$ and where $F_1,F_2$ are bounded operators on $\mathcal{D}_P$.
Theorem 1.3 in \cite{sir's tetrablock paper} says that the two fundamental equations can be solved and the solutions $F_1$ and $F_2$ are unique. The unique solutions $F_1$ and $F_2$ of equations (\ref{Maa12}) are called the {\em{fundamental operators}} of the tetrablock contraction $(A,B,P)$. Moreover, $w(F_1)$ and $w(F_2)$ are not greater than $1$.

The adjoint triple $(A^*,B^*,P^*)$ is also a tetrablock contraction as can be seen from the definition. By what we stated above there are unique $G_1,G_2 \in \mathcal{B}(\mathcal{D}_{P^*})$ such that
\begin{eqnarray}\label{Maa13}
A^*-BP^*=D_{P^*}G_1D_{P^*} \text{ and } B^*-AP^*=D_{P^*}G_2D_{P^*}.
\end{eqnarray}
Moreover, $w(G_1)$ and $w(G_2)$ are not greater than $1$. A tetrablock contraction $(A,B,P)$ on a Hilbert space $\cl{H}$ is called pure tetrablock contraction, if the contraction $P$ is pure.
Along the lines of \cite{Sourav da's 2nd}, a model theory for pure tetrablock contractions was developed in \cite{sau}, using the fundamental operators. Our result for tetrablock contractions is follows.

\begin{theorem}\label{tetrathm}
Let $F_1$ and $F_2$ be fundamental operators of a tetrablock contraction $(A,B,P)$ and $G_1$ and $G_2$ be fundamental operators of the tetrablock contraction $(A^*,B^*,P^*)$. Then
\begin{eqnarray}\label{adeq3}
&&(G_1^*+G_2z)\Theta_{P}(z)=\Theta_{P}(z)(F_1+F_2^*z) \text{ and}
\\\label{help}
&&(G_2^*+G_1z)\Theta_{P}(z)=\Theta_{P}(z)(F_2+F_1^*z) \text{ holds} \text{ for all }z \in \mathbb{D}.
\end{eqnarray}
Conversely, let $P$ be a pure contraction on a Hilbert space $\cl{H}$. Let $G_1,G_2 \in \mathcal{B}(\mathcal{D}_{P^*})$ have numerical radii no greater than one and satisfy
\begin{eqnarray}\label{cond}
&&[G_1,G_2]=0 \text{ and }
[G_1,{G_1}^*]=[G_2,{G_2}^*].
\end{eqnarray}
Suppose $G_1$ and $G_2$ also satisfy Equations (\ref{adeq3}) and (\ref{help}), for some operators $F_1,F_2 \in \mathcal{B}(\mathcal{D}_{P})$ with numerical radii no greater than one. Then there exists a tetrablock contraction $(A,B,P)$ such that $F_1,F_2$ are fundamental operators of $(A,B,P)$ and $G_1,G_2$ are fundamental operators of $(A^*,B^*,P^*)$.
\end{theorem}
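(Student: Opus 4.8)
The plan is to deduce each direction from the corresponding fact about $\Gamma$-contractions. For the necessity of (\ref{adeq3}) and (\ref{help}) I would slice the tetrablock by symmetrized bidiscs: for $\beta$ on the unit circle, $(x_1,x_2,x_3)\mapsto(\beta x_1,\bar\beta x_2,x_3)$ is an automorphism of $\overline E$, while $(y_1,y_2,y_3)\mapsto(y_1+y_2,y_3)$ maps $\overline E$ onto $\Gamma$ (in the matrix picture it sends $(a_{11},a_{22},\det A)$ to $(a_{11}+a_{22},\det A)$, and the eigenvalues of a matrix of norm $\le 1$ lie in $\overline{\mathbb D}$); composing, $(x_1,x_2,x_3)\mapsto(\beta x_1+\bar\beta x_2,x_3)$ carries $\overline E$ onto $\Gamma$ (cf.\ \cite{awy}). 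Hence for a tetrablock contraction $(A,B,P)$ and every such $\beta$ the pair $(\beta A+\bar\beta B,P)$ is a $\Gamma$-contraction, and reading off (\ref{Maa12}) and (\ref{Maa13}) its fundamental operator is $\beta F_1+\bar\beta F_2$ while that of its adjoint is $\bar\beta G_1+\beta G_2$. Feeding this into Theorem \ref{genthm} gives, for all such $\beta$ and all $z\in\mathbb D$,
\[
\Theta_P(z)\bigl(\beta F_1+\bar\beta F_2+(\bar\beta F_1^*+\beta F_2^*)z\bigr)=\bigl(\beta G_1^*+\bar\beta G_2^*+(\bar\beta G_1+\beta G_2)z\bigr)\Theta_P(z).
\]
Grouping the terms carrying $\beta$ and those carrying $\bar\beta$, this has the form $\beta\,X_1(z)+\bar\beta\,X_2(z)=0$, where $X_1$ and $X_2$ are precisely the discrepancies in (\ref{adeq3}) and (\ref{help}); taking $\beta=1$ and $\beta=i$ forces $X_1=X_2=0$, which is (\ref{adeq3}) and (\ref{help}).

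For sufficiency I would use the functional model. Since $P$ is pure, $\Theta_P$ is inner, $M_{\Theta_P}$ is an isometry, and $\mathcal H$ may be identified with the model space $\mathbb H_P:=H^2_{\mathcal D_{P^*}}(\mathbb D)\ominus M_{\Theta_P}H^2_{\mathcal D_P}(\mathbb D)$, with $P^*=M_z^*|_{\mathbb H_P}$. Put $\varphi_1(z)=G_1^*+zG_2$ and $\varphi_2(z)=G_2^*+zG_1$, viewed as $\mathcal B(\mathcal D_{P^*})$-valued multipliers of $H^2_{\mathcal D_{P^*}}(\mathbb D)$. Equations (\ref{adeq3}) and (\ref{help}) say exactly that $M_{\varphi_1}M_{\Theta_P}=M_{\Theta_P}M_{F_1+zF_2^*}$ and $M_{\varphi_2}M_{\Theta_P}=M_{\Theta_P}M_{F_2+zF_1^*}$, so $M_{\Theta_P}H^2_{\mathcal D_P}(\mathbb D)$ is invariant under $M_{\varphi_1},M_{\varphi_2}$ (and trivially under $M_z$); thus $\mathbb H_P$ is semi-invariant for $(M_{\varphi_1},M_{\varphi_2},M_z)$ on $H^2_{\mathcal D_{P^*}}(\mathbb D)$. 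A coefficient comparison in $\varphi_1\varphi_2$ and $\varphi_2\varphi_1$ shows that (\ref{cond}) is exactly what makes $M_{\varphi_1}$ and $M_{\varphi_2}$ commute, and, together with $w(G_i)\le 1$, the conditions (\ref{cond}) are precisely those which, by the tetrablock isometry/unitary theory of \cite{sir's tetrablock paper} and the model of \cite{sau}, make $(M_{\varphi_1},M_{\varphi_2},M_z)$ a tetrablock isometry. I would then set $A:=P_{\mathbb H_P}M_{\varphi_1}|_{\mathbb H_P}$ and $B:=P_{\mathbb H_P}M_{\varphi_2}|_{\mathbb H_P}$: by Sarason's lemma for the semi-invariant $\mathbb H_P$ the triple $(A,B,P)$ is commuting, and, passing to a tetrablock-unitary extension of $(M_{\varphi_1},M_{\varphi_2},M_z)$ and compressing once more, one gets $\|r(A,B,P)\|\le\|r\|_{\overline E,\infty}$ for all rational $r$ with poles off $\overline E$; hence $(A,B,P)$ is a (pure) tetrablock contraction.

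It remains to check that the fundamental operators come out right. Since $\mathbb H_P$ is co-invariant, $A^*=M_{\varphi_1}^*|_{\mathbb H_P}$, $B^*=M_{\varphi_2}^*|_{\mathbb H_P}$, $P^*=M_z^*|_{\mathbb H_P}$; using that $M_z^*$ annihilates the constants and that $M_{\varphi_1}-M_zM_{\varphi_2}^*=(I-M_zM_z^*)M_{G_1^*}$ (and its companion with the roles of $1,2$ exchanged), a short computation in model coordinates—where $D_{P^*}h$ corresponds to $h(0)\in\mathcal D_{P^*}$—gives $A^*-BP^*=D_{P^*}G_1D_{P^*}$ and $B^*-AP^*=D_{P^*}G_2D_{P^*}$, so $G_1,G_2$ are the fundamental operators of $(A^*,B^*,P^*)$ (this is also part of the model statement of \cite{sau}). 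For $(A,B,P)$ itself I would avoid the heavier $\mathcal D_P$-side computation: let $\widetilde F_1,\widetilde F_2$ be its fundamental operators; applying the necessity direction just proved to $(A,B,P)$ gives $(G_1^*+G_2z)\Theta_P(z)=\Theta_P(z)(\widetilde F_1+\widetilde F_2^*z)$ and the companion with subscripts swapped; subtracting (\ref{adeq3}) and (\ref{help}) and using that $M_{\Theta_P}$ is injective ($P$ pure) forces $\widetilde F_1+z\widetilde F_2^*=F_1+zF_2^*$ identically, whence $\widetilde F_1=F_1$ and $\widetilde F_2=F_2$.

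The step I expect to be the main obstacle is the middle one: establishing (or citing in precisely the right form) that $(M_{\varphi_1},M_{\varphi_2},M_z)$ is genuinely a tetrablock isometry and that compressing it to the semi-invariant $\mathbb H_P$ yields a tetrablock contraction—this is the model-theoretic core and rests on the tetrablock isometry/unitary apparatus of \cite{sir's tetrablock paper} and \cite{sau}—together with the functional-model bookkeeping that identifies $G_1,G_2$ with the fundamental operators of $(A^*,B^*,P^*)$, which requires care with the embedding of $\mathcal D_{P^*}$ into $\mathbb H_P$ and with $D_{P^*}$ in model coordinates. The comparison trick in the last paragraph is what keeps the $\mathcal D_P$-side from adding a second, similar computation; its only input beyond the construction is the injectivity of $M_{\Theta_P}$, which is exactly where purity of $P$ is used.
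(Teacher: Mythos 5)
Your proposal is correct, and its converse half is essentially the paper's own argument in different clothing: the paper sets $A=W^*M_{G_1^*+G_2z}W$ and $B=W^*M_{G_2^*+G_1z}W$ with the same isometry $W$, whose range is the orthocomplement of the range of $M_{\Theta_P}$ (Lemma \ref{L0}), i.e.\ your model space $\mathbb{H}_P$; it uses the coefficient identities (\ref{cond}) together with Theorem 5.7(3) of \cite{sir's tetrablock paper} to see that $(M_{G_1^*+G_2z},M_{G_2^*+G_1z},M_z)$ is a tetrablock isometry, gets the von Neumann inequality by compressing polynomials through the co-invariant range of $W$ (no tetrablock-unitary extension or rational functions are needed, $\bar{E}$ being polynomially convex), computes $A^*-BP^*=W^*(P_{\mathbb{C}}\otimes G_1)W=D_{P^*}G_1D_{P^*}$ exactly as in your model-coordinate computation, and finishes with the same comparison trick, cancelling the isometry $M_{\Theta_P}$ (purity of $P$) to identify $F_1,F_2$. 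Where you genuinely diverge is the forward direction: the paper obtains (\ref{adeq3}) and (\ref{help}) by applying Lemma \ref{haha} --- a direct power-series manipulation of $\Theta_{P^*}$ resting on Lemmas \ref{tetra}, \ref{tetralem4} and \ref{tetralem3} --- to the adjoint triple $(A^*,B^*,P^*)$, whereas you deduce them from Theorem \ref{genthm} by slicing: for $|\beta|=1$ the pair $(\beta A+\bar{\beta}B,P)$ is a $\Gamma$-contraction whose fundamental operators are $\beta F_1+\bar{\beta}F_2$ and $\bar{\beta}G_1+\beta G_2$ (by uniqueness of solutions of the fundamental equations), and separating the $\beta$ and $\bar{\beta}$ parts at $\beta=1$ and $\beta=i$ yields both identities at once. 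Your route buys economy --- it recycles the already-proved $\Gamma$-contraction result and avoids redoing the series computation --- at the price of importing from \cite{awy} and \cite{sir's tetrablock paper} the facts that $(x_1,x_2,x_3)\mapsto(\beta x_1,\bar{\beta}x_2,x_3)$ preserves $\bar{E}$ and that $(x_1,x_2,x_3)\mapsto(x_1+x_2,x_3)$ maps $\bar{E}$ into $\Gamma$; the paper's computation is self-contained given the lemmas it quotes from \cite{sau}.
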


\section{results for pure $\Gamma$-contractions}\label{section1}

\begin{definition}
Let $\mathcal F$ and $\mathcal G$ be two Hilbert spaces. Let $F \in \mathcal B (\mathcal F)$ and $G \in \mathcal B ( \mathcal G)$. Then $(F, G)$ is called an admissible pair of operators if there is a $\Gamma$-contraction $(S, P)$ on a Hilbert space $\mathcal H$ such that $\mathcal D_P = \mathcal F$, $\mathcal D_{P^*} = \mathcal G$, $F$ is the fundamental operator of $(S, P)$ and $G$ is the fundamental operator of $(S^*, P^*)$. \end{definition}

The Hilbert spaces $H^2(\bb D)$ and $H^2(\bb T)$ are unitarily
equivalent via the map $z^n\mapsto e^{int}.$ Further, for a given
Hilbert space $\cl L, \ H^2_{\cl L}(\bb D)$ (respectively
$H^2_{\cl L} (\bb T) $) is unitarily equivalent to $\ H^2(\bb
D)\otimes \cl L(\text{respectively }\ H^2(\bb T)\otimes \cl L).$
We shall identify these unitarily equivalent spaces and use them,
without mention, interchangeably as per notational convenience.

The following useful characterization of the fundamental operator can be found in \cite{sir's tetrablock paper} (Lemma 4.1).
\begin{lemma}\label{adlem1}
Let $(S,P)$ be a $\Gamma$-contraction on a Hilbert space $\mathcal{H}$ and $F \in \mathcal{B}(\mathcal{D}_P)$ be its fundamental operator. Then $F$ is the only operator which satisfies
\begin{eqnarray}
D_PS=FD_P+F^*D_PP.
\end{eqnarray}
\end{lemma}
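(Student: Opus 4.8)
The plan is to prove the two implications hidden in the word ``only'': that the fundamental operator $F$ satisfies the displayed identity, and that it is the sole solution in $\mathcal{B}(\mathcal{D}_P)$. For the first, I would work from the fundamental equation $S=S^*P+D_PFD_P$ and its adjoint $S^*=P^*S+D_PF^*D_P$, and aim to show that $E:=D_PS-FD_P-F^*D_PP$ is zero. Left-multiplying the fundamental equation by $D_P$ and using $D_P^2=I-P^*P$ turns $D_PS$ into $D_PS^*P+FD_P-P^*PFD_P$, so that $E=D_PS^*P-P^*PFD_P-F^*D_PP$. I would then substitute the adjoint fundamental equation into $D_PS^*P$, and reduce the resulting term $D_PP^*SP$ using the commutation relation $PD_P=D_{P^*}P$ (equivalently $D_PP^*=P^*D_{P^*}$) together with the commutativity $SP=PS$; this collapses $D_PP^*SP$ to $P^*PD_PS$. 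After collecting terms the whole expression should reorganize as $E=P^*PE$, that is $D_P^2E=0$.

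Since every term defining $E$ takes values in $\mathcal{D}_P$, the relation $0=\langle D_P^2Eh,Eh\rangle=\|D_PEh\|^2$ shows $Eh\in\ker D_P$; as $Eh\in\mathcal{D}_P=(\ker D_P)^\perp$ as well, we get $Eh=0$ for all $h$, proving that $F$ is a solution. For uniqueness, I would suppose $F'$ is another solution and set $X=F'-F$, so that $XD_P+X^*D_PP=0$. Left-multiplying by $D_P$ and writing $A:=D_PXD_P$ gives $A+A^*P=0$; taking adjoints yields $A^*=-P^*A$, and back-substitution gives the fixed-point relation $A=P^*AP$, hence $A=P^{*n}AP^n$ for every $n\ge0$. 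Here the decisive estimate is $\sum_{k\ge0}\|D_PP^kh\|^2=\lim_N(\|h\|^2-\|P^{N+1}h\|^2)\le\|h\|^2$, valid for any contraction, which forces $\|D_PP^nh\|\to0$. Consequently $\langle Ah,h\rangle=\langle AP^nh,P^nh\rangle=\langle XD_PP^nh,D_PP^nh\rangle\to0$; since the left-hand side is independent of $n$ it vanishes for all $h$, so $A=0$ by polarization over $\mathbb{C}$. Then $D_PXD_P=0$ and the density of the range of $D_P$ in $\mathcal{D}_P$ give $X=0$, i.e. $F'=F$.

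The step I expect to be the main obstacle is this uniqueness argument, precisely because the lemma makes no purity assumption on $P$: one would like $P^n\to0$ strongly, but that fails for a general contraction. The resolution is to notice that the weaker fact $\|D_PP^nh\|\to0$ always holds, by the telescoping of the defect quantities, and that this is exactly enough to pass $A=P^{*n}AP^n$ to the limit. By contrast the existence half is essentially mechanical; the only place needing care is the correctly-oriented, repeated use of $PD_P=D_{P^*}P$ and of $SP=PS$ when reducing $D_PP^*SP$ to $P^*PD_PS$.
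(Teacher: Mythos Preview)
Your argument is correct. The paper does not actually prove this lemma; it merely quotes it from \cite{sir's tetrablock paper} (Lemma 4.1), so there is no in-paper proof to compare against. Your approach is self-contained and sound: for existence, the reduction $D_PP^*SP=P^*D_{P^*}SP=P^*D_{P^*}PS=P^*PD_PS$ (using $SP=PS$ and $D_{P^*}P=PD_P$) indeed yields $E=P^*PE$, hence $D_P^2E=0$, and since $E$ maps into $\mathcal{D}_P=(\ker D_P)^\perp$ this forces $E=0$. For uniqueness, the key observation that $\|D_PP^nh\|\to 0$ for \emph{any} contraction $P$ (without purity) is exactly the right substitute for strong convergence of $P^n$, and passing $A=P^{*n}AP^n$ through the quadratic form $\langle Ah,h\rangle=\langle XD_PP^nh,D_PP^nh\rangle$ is a clean way to conclude $A=0$. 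One cosmetic remark: in the existence step you never actually need $PD_P=D_{P^*}P$ in isolation---commutation of $D_P$ with $P^*P$ suffices---but the route you outlined via $D_PP^*=P^*D_{P^*}$ works just as well.
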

The next lemma gives relations between the fundamental operators of  $\Gamma$-contractions $(S,P)$ and $(S^*,P^*)$. These can be found in \cite{sir and me}(Lemma 7 and Lemma 11).
\begin{lemma}\label{adlem2}
Let $(S,P)$ be a $\Gamma$-contraction and $F$, $G$ are fundamental operators of $(S,P)$ and $(S^*,P^*)$ respectively. Then
\begin{enumerate}
\item[(a)] $PF=G^*P|_{\mathcal{D}_{P}}$ and
\\
\item[(b)] $D_{P^*}D_PF-PF^*=G^*D_{P^*}D_P-GP|_{\mathcal{D}_P}$
\end{enumerate} hold.
\end{lemma}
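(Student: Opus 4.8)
The plan is to prove (a) and (b) separately, in each case reducing the target identity on $\mathcal{D}_P$ (resp.\ $\mathcal{D}_{P^*}$) to an identity with an extra defect factor in front, verifying that by algebra, and then stripping the factor off using that $D_P$ is injective on $\mathcal{D}_P$ and $D_{P^*}$ is injective on $\mathcal{D}_{P^*}$ (being self-adjoint with $\ker = (\operatorname{Ran})^{\perp}$). The standing ingredients will be the two fundamental equations $S-S^*P=D_PFD_P$ and $S^*-SP^*=D_{P^*}GD_{P^*}$ together with their adjoints; the commutation relation $PD_P=D_{P^*}P$ (and its adjoint $D_PP^*=P^*D_{P^*}$) recorded in the Introduction; the hypothesis $SP=PS$; and Lemma~\ref{adlem1} applied both to $(S,P)$ and to the $\Gamma$-contraction $(S^*,P^*)$ (whose fundamental operator is $G$), i.e.\ $D_PS=FD_P+F^*D_PP$ and $D_{P^*}S^*=GD_{P^*}+G^*D_{P^*}P^*$, along with the adjoints of these.

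For (a) I would left-multiply the fundamental equation by $P$ and right-multiply the adjoint equation $S-PS^*=D_{P^*}G^*D_{P^*}$ by $P$, each time converting $PD_P$ and $D_{P^*}P$ into one another via $PD_P=D_{P^*}P$. The two resulting identities have the same middle term $PS^*P$; subtracting them makes the left side $PS-SP=0$ and collapses the right side to $D_{P^*}(PF-G^*P)D_P$. Since $PF-G^*P$ maps $\mathcal{D}_P$ into $\mathcal{D}_{P^*}$, injectivity of $D_{P^*}$ on $\mathcal{D}_{P^*}$ lets me strip the outer $D_{P^*}$, and density of $\operatorname{Ran}D_P$ in $\mathcal{D}_P$ strips the inner $D_P$, yielding $PF=G^*P$ on $\mathcal{D}_P$.

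For (b), write $X=D_{P^*}D_PF-PF^*$ and $Y=G^*D_{P^*}D_P-GP$; both send $\mathcal{D}_P$ into $\mathcal{D}_{P^*}$, so by injectivity of $D_{P^*}$ on $\mathcal{D}_{P^*}$ it suffices to show $D_{P^*}X=D_{P^*}Y$ on $\mathcal{D}_P$. Premultiplying $X$ and using $D_{P^*}^2=I-PP^*$, $D_{P^*}P=PD_P$, together with the adjoint $S^*D_P=D_PF^*+P^*D_PF$ of Lemma~\ref{adlem1}, I expect $D_{P^*}X$ to simplify to $D_PF-PS^*D_P$. For $D_{P^*}Y$ I would substitute $D_{P^*}G^*=SD_{P^*}-PD_{P^*}G$ (the adjoint of Lemma~\ref{adlem1} for $(S^*,P^*)$), replace $D_{P^*}GD_{P^*}$ by $S^*-SP^*$ via the second fundamental equation, and use $SP=PS$; the term $PSP^*D_P$ then cancels and $D_{P^*}Y$ should reduce to $SD_P-PS^*D_P-D_{P^*}GP$. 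Comparing the two, $D_{P^*}X=D_{P^*}Y$ becomes equivalent to the single intermediate identity $SD_P=D_PF+D_{P^*}GP$ on $\mathcal{D}_P$.

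Proving this intermediate identity is the step I expect to carry the real weight. The plan is to multiply it on the right by $D_P$ and verify $SD_P^2=D_PFD_P+D_{P^*}GPD_P$ directly: inserting both fundamental equations and $PD_P=D_{P^*}P$ turns the right side into $(S-S^*P)+(S^*P-SP^*P)=S(I-P^*P)=SD_P^2$, after which the right-hand $D_P$ is stripped by density of $\operatorname{Ran}D_P$. The main obstacle throughout is bookkeeping rather than depth: one must track which operators act on $\mathcal{D}_P$ and which on $\mathcal{D}_{P^*}$, justify each passage $PD_P=D_{P^*}P$ or $D_PP^*=P^*D_{P^*}$ on the correct subspace, and confirm that every quantity whose outer defect factor is cancelled genuinely takes values in the defect space where that factor is injective. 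Sequencing the three stripping arguments and the cancellation inside $D_{P^*}Y$ in the right order is the only genuinely delicate part.
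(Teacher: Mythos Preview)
Your argument is correct in every detail. For (a), the sandwich $D_{P^*}(PF-G^*P)D_P=PS-SP=0$ together with the two stripping steps is exactly right; for (b), your reduction of $D_{P^*}X=D_{P^*}Y$ to the intermediate identity $SD_P=D_PF+D_{P^*}GP$ and the verification of the latter via $SD_P^2=(S-S^*P)+(S^*-SP^*)P$ all check out. The only point to watch, which you already flagged, is that every time you cancel an outer $D_{P^*}$ you must know the operand lies in $\mathcal{D}_{P^*}$; this is guaranteed because $P$ carries $\mathcal{D}_P$ into $\mathcal{D}_{P^*}$ (from $PD_P=D_{P^*}P$) and $F,F^*$ preserve $\mathcal{D}_P$ while $G,G^*$ preserve $\mathcal{D}_{P^*}$.

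As for comparison: the present paper does not supply its own proof of this lemma; it simply quotes the result from \cite{sir and me} (Lemmas~7 and~11 there). Your direct algebraic proof---sandwiching between defect operators, invoking the two fundamental equations and Lemma~\ref{adlem1} for both $(S,P)$ and $(S^*,P^*)$, then stripping---is the standard route and is essentially what that reference does as well.
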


{\em \underline{{Proof of Theorem \ref{genthm}}.}}
For $z \in \mathbb{D}$, we have
\begin{eqnarray*}
&&\Theta_P(z)(F+F^*z)
\\
&=&[-P+\sum_{n=0}^{\infty}z^{n+1}D_{P^*}{P^*}^nD_P](F+F^*z)
\\
&=&
-PF+z(D_{P^*}D_PF-PF^*) + \sum_{n=1}^{\infty}z^{n+1}D_{P^*}{P^*}^nD_PF + \sum_{n=0}^{\infty}z^{n+2}D_{P^*}{P^*}^nD_PF^*
\\
&=&
-PF+z(D_{P^*}D_PF-PF^*)+ \sum_{n=2}^{\infty} D_{P^*}{P^*}^{n-2}(P^*D_PF+D_PF^*)
\end{eqnarray*}
\begin{eqnarray*}
&=&-PF+z(D_{P^*}D_PF-PF^*)+ \sum_{n=2}^{\infty} D_{P^*}{P^*}^{n-2}S^*D_P  \;\;\; [\text{ by Lemma \ref{adlem1}}]
\\
&=&
-PF+z(D_{P^*}D_PF-PF^*)+ \sum_{n=2}^{\infty} D_{P^*}S^*{P^*}^{n-2}D_P.
\end{eqnarray*}
And
\begin{eqnarray*}
&&(G^*+Gz)\Theta_P(z)
=
(G^*+Gz)[-P+\sum_{n=0}^{\infty}z^{n+1}D_{P^*}{P^*}^nD_P]|_{\cl{D}_P}
\\
&=&-G^*P|_{\mathcal{D}_P}+z(G^*D_{P^*}D_P-GP|_{\mathcal{D}_P}) + \sum_{n=1}^{\infty} z^{n+1}G^*D_{P^*}{P^*}^nD_P+\sum_{n=0}^{\infty}z^{n+2}GD_{P^*}{P^*}^nD_P
\end{eqnarray*}
\begin{eqnarray*}
&=&
-G^*P|_{\mathcal{D}_P}+z(G^*D_{P^*}D_P-GP|_{\mathcal{D}_P}) + \sum_{n=2}^{\infty}z^n(G^*D_{P^*}P^*+GD_{P^*}){P^*}^{n-2}D_P
\\
&=&
-G^*P|_{\mathcal{D}_P}+z(G^*D_{P^*}D_P-GP|_{\mathcal{D}_P}) + \sum_{n=2}^{\infty}z^nD_{P^*}S^*{P^*}^{n-2}D_P.
\end{eqnarray*}
Now the equality in Equation (\ref{adeq1}) follows from Lemma \ref{adlem2}.
This completes the proof. \qed

Define $W: \mathcal{H} \to H^2(\mathbb{D})\otimes \mathcal{D}_{P^*} $ by
$
W(h)=\sum_{n=0}^{\infty} z^n \otimes D_{P^*}{P^*}^nh \text{ for all $h \in \mathcal{H}$}.
$
Note that
$$
||W h||^2 = \sum_{n=0}^{\infty} ||D_{P^*}{P^*}^nh||^2 = \sum_{n=0}^{\infty} \left(||{P^*}^nh||^2 - ||{P^*}^{n+1}h||^2\right)=||h||^2 - \lim_{n \to \infty}||{P^*}^nh||^2.
$$
Therefore $W$ is an isometry in the case when $P$ is pure. It is easy to calculate that
$$
W^*(z^n \otimes \xi) = P^nD_{P^*} \xi \text{ for all $\xi \in \mathcal{D}_{P^*}$ and $n \geq 0$.}
$$
\begin{lemma}\label{L0}
For every contraction $P$, the identity
\begin{eqnarray}\label{L1}
WW^*+M_{\Theta_P}M_{\Theta_P}^*=I_{H^2(\mathbb{D})\otimes \mathcal{D}_{P^*}}
\end{eqnarray}
holds.
\end{lemma}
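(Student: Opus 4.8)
The plan is to recognise identity (\ref{L1}) as a standard fact from Sz.-Nagy--Foias model theory, reformulated in the present notation, and to prove it by an explicit computation on the orthonormal-type decomposition of $H^2(\mathbb{D})\otimes\cl D_{P^*}$. First I would compute $WW^*$ directly: from the formulas $W(h)=\sum_{n\ge 0} z^n\otimes D_{P^*}{P^*}^n h$ and $W^*(z^n\otimes\xi)=P^nD_{P^*}\xi$, one gets, for $\xi\in\cl D_{P^*}$ and $m\ge 0$,
\begin{eqnarray*}
WW^*(z^m\otimes\xi)=W(P^mD_{P^*}\xi)=\sum_{n\ge 0} z^n\otimes D_{P^*}{P^*}^nP^mD_{P^*}\xi,
\end{eqnarray*}
so the $(n,m)$ ``block entry'' of $WW^*$ is $D_{P^*}{P^*}^nP^mD_{P^*}$.

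Next I would compute $M_{\Theta_P}M_{\Theta_P}^*$ in the same block form. Writing $\Theta_P(z)=\sum_{k\ge 0} z^k\Theta_k$ with $\Theta_0=-P|_{\cl D_P}$ and $\Theta_k=D_{P^*}{P^*}^{k-1}D_P$ for $k\ge 1$, the operator $M_{\Theta_P}$ sends $z^j\otimes\eta\mapsto\sum_{k\ge 0} z^{j+k}\otimes\Theta_k\eta$, so its adjoint acts by $M_{\Theta_P}^*(z^m\otimes\xi)=\sum_{j+k=m}z^j\otimes\Theta_k^*\xi$, and hence the $(n,m)$ block of $M_{\Theta_P}M_{\Theta_P}^*$ is $\sum_{k\ge 0}\Theta_{n+k}\Theta_{m+k}^{*}$ (with the convention that negative-indexed $\Theta$'s vanish). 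The identity (\ref{L1}) then reduces to the family of operator identities on $\cl D_{P^*}$, indexed by $n,m\ge 0$,
\begin{eqnarray*}
D_{P^*}{P^*}^nP^mD_{P^*}+\sum_{k\ge 0}\Theta_{n+k}\Theta_{m+k}^{*}=\delta_{nm}I_{\cl D_{P^*}}.
\end{eqnarray*}
I would verify these by substituting the explicit $\Theta_k$, using $D_{P^*}^2=I-PP^*$ repeatedly to telescope the sum $\sum_k D_{P^*}{P^*}^{n+k-1}D_P^2 P^{m+k-1}D_{P^*}=\sum_k D_{P^*}{P^*}^{n+k-1}(I-P^*P)P^{m+k-1}D_{P^*}$, together with the intertwining $PD_P=D_{P^*}P$ (equivalently $D_PP^*=P^*D_{P^*}$), and treating the $k=0$ term (which involves $\Theta_0=-P$, not the shifted formula) separately so that for $n$ or $m$ equal to $0$ the $-P\cdot(-P)^*=PP^*$ contribution combines correctly with the $D_{P^*}^2$ coming from the $k=1$ term.

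Alternatively, and more cleanly, I would avoid block bookkeeping by using the defining formula $W^*(z^n\otimes\xi)=P^nD_{P^*}\xi$ to get $W^*W=I_{\cl H}$ when $P$ is pure and, more to the point, to recognise $WW^*$ as the orthogonal projection onto the Sz.-Nagy--Foias model space $H^2_{\cl D_{P^*}}(\bb D)\ominus M_{\Theta_P}H^2_{\cl D_P}(\bb D)$; since $M_{\Theta_P}$ is an isometry (as $\Theta_P$ is inner when $P$ is c.n.u., and in general one checks $\|M_{\Theta_P}f\|=\|f\|$ is not needed---what is needed is only that $M_{\Theta_P}M_{\Theta_P}^*$ is the projection onto $\overline{\operatorname{Ran}}M_{\Theta_P}$ and that this range is the orthocomplement of $\operatorname{Ran}W W^*$), the identity $WW^*+M_{\Theta_P}M_{\Theta_P}^*=I$ expresses the orthogonal decomposition of $H^2(\bb D)\otimes\cl D_{P^*}$ into these two complementary pieces. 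The main obstacle is the bookkeeping in the first approach---keeping the shifted index in $\Theta_k$ consistent and correctly isolating the exceptional $k=0$ term---and, in the second approach, being careful about what exactly is claimed when $P$ is merely a contraction rather than c.n.u.: here one should note that (\ref{L1}) is asserted for \emph{every} contraction $P$, so the clean projection-theoretic statement needs the general-$P$ version of the de Branges--Rovnyak/Sz.-Nagy--Foias complementation identity, which is why I would ultimately fall back on the direct block computation as the safest route.
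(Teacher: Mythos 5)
Your first (block-computation) route is sound and genuinely different from the paper's argument: the paper never expands in monomials, but instead evaluates $WW^*+M_{\Theta_P}M_{\Theta_P}^*$ on reproducing kernels, using Arveson's formula $W^*(k_z\otimes\xi)=(I-\bar zP)^{-1}D_{P^*}\xi$ together with the standard identity $I-\Theta_P(w)\Theta_P(z)^*=(1-w\bar z)D_{P^*}(I-wP^*)^{-1}(I-\bar zP)^{-1}D_{P^*}$, and then invokes totality of $\{k_z\}$ in $H^2(\mathbb D)$; that works verbatim for every contraction and avoids all index bookkeeping, while your Fourier-coefficient computation is more elementary but needs exactly the care you anticipate. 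One concrete correction is required: the $(n,m)$ block of $M_{\Theta_P}M_{\Theta_P}^*$ is $\sum_{k\ge 0}\Theta_{n-k}\Theta_{m-k}^*$ (a finite sum, terminating when an index becomes negative), not $\sum_{k\ge 0}\Theta_{n+k}\Theta_{m+k}^*$. As literally written your sum is infinite, reaches $\Theta_0=-P|_{\mathcal D_P}$ only when $n=0$ or $m=0$, and its partial sums telescope to expressions involving the strong limit of $P^{*N}P^{N}$, so it does not reduce to $\delta_{nm}I_{\mathcal D_{P^*}}$ for a general contraction. With the corrected (downward) indexing the verification goes through exactly in the way you sketch: for $n\ge m$ the terms with $0\le k\le m-1$ telescope via $D_P^2=I-P^*P$ and $D_PP^*=P^*D_{P^*}$ to $D_{P^*}P^{*(n-m)}D_{P^*}-D_{P^*}P^{*n}P^mD_{P^*}$, the exceptional term $\Theta_{n-m}\Theta_0^*$ contributes $-D_{P^*}P^{*(n-m)}D_{P^*}$ when $n>m$ and $PP^*$ when $n=m$, and adding the $WW^*$ block $D_{P^*}P^{*n}P^mD_{P^*}$ gives $0$, respectively $D_{P^*}^2+PP^*=I$. (Minor point: the relation you actually use in the telescoping is $D_P^2=I-P^*P$, though you cite $D_{P^*}^2=I-PP^*$.)

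Your second, projection-theoretic route is rightly discarded for the statement as claimed: for a general contraction $\Theta_P$ is merely contractive, so $M_{\Theta_P}M_{\Theta_P}^*$ need not be a projection and $W$ need not be an isometry, and the orthogonal-decomposition picture is unavailable; the corrected block computation, or the paper's reproducing-kernel computation, is the right way to cover every contraction $P$.
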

\begin{proof}
As observed by Arveson in the proof of Theorem 1.2 in \cite{Arveson}, the operator $W^*$ satisfies the identity
$$
W^*(k_z \otimes \xi)= (I - \bar{z}P)^{-1}D_{P^*}\xi \text{ for $z \in \mathbb{D}$ and $\xi \in \mathcal{D}_{P^*}$},
$$
where $k_z(w):=(1-\langle w,z\rangle)^{-1}$ for all $w \in \mathbb{D}$. Therefore we have
\begin{eqnarray*}
&&\langle (WW^*+M_{\Theta_P}M_{\Theta_P}^*)(k_z \otimes \xi), (k_w \otimes \eta) \rangle
\\
&=& \langle W^*(k_z \otimes \xi),W^*(k_w \otimes \eta) \rangle + \langle M_{\Theta_P}^*(k_z \otimes \xi), M_{\Theta_P}^*(k_w \otimes \eta)  \rangle
\\
&=& \langle (I - \bar{z}P)^{-1}D_{P^*}\xi, (I - \bar{w}P)^{-1}D_{P^*}\eta  \rangle + \langle k_z \otimes \Theta_P(z)^*\xi, k_w \otimes \Theta_P(w)^*\eta  \rangle
\\
&=& \langle D_{P^*}(I - wP^*)^{-1}(I - \bar{z}P)^{-1}D_{P^*}\xi, \eta \rangle + \langle k_z, k_w \rangle \langle \Theta_P(w)\Theta_P(z)^*\xi, \eta  \rangle
\\
&=& \langle k_z \otimes \xi, k_w \otimes \eta \rangle \text{ for all $z ,w \in \mathbb{D}$ and $\xi, \eta \in \mathcal{D}_{P^*}$}.
\end{eqnarray*}
Where the last equality follows from the following well-known identity
$$
I - \Theta_P(w)\Theta_P(z)^* = (1 - w\bar{z})D_{P^*}(I - wP^*)^{-1}(I - \bar{z}P)^{-1}D_{P^*}.
$$
Now using the fact that $\{k_z: z \in \mathbb{D}\}$ forms a total set of $H^2(\mathbb{D})$, the assertion follows.
\end{proof}

\underline{\textit{Proof of Theorem \ref{genthm1}}.} Since $P$ is
pure, $W$ is an isometry. We first find a relation between $P$,
$W$ and $M_z$, multiplication by the variable $z$ on
$H^2(\mathbb{D})\otimes \mathcal{D}_{P^*}$.
\begin{eqnarray}\label{contraction}
M_z^*Wh=M_z^*\left(\sum_{n=0}^{\infty}z^nD_{P^*}{P^*}^nh\right)=\sum_{n=0}^{\infty}z^nD_{P^*}{P^*}^{n+1}h=WP^*h.
\end{eqnarray}
Therefore $M_z^*W=WP^*$. Define $S$ on $\mathcal{H}$ by
$S=W^*M_{G^*+Gz}W$. Since $P$ is pure, from Lemma \ref{L0}, we
have $(RanW)^\perp=RanM_{\Theta_P}$. The equation
$M_{\Theta_P}M_{F+F^*z}=M_{G^*+Gz}M_{\Theta_P}$ implies that
$RanM_{\Theta_P}$ is invariant under $M_{G^*+Gz}$, in other words
$RanW$ is co-invariant under $M_{G^*+Gz}$.
\begin{eqnarray*}
P^*S^*&=&W^*M_z^*WW^*M_{G^*+Gz}^*W
\\
&=&
W^*M_z^*M_{G^*+Gz}^*W \;\;\;[\text{ since $WW^*$ is a projection onto $RanW$.}]
\\
&=&
W^*M_{G^*+Gz}^*M_z^*W\;\;\;[\text{ since $M_z$ and $M_{G^*+Gz}$ commute.}]
\\
&=&
W^*M_{G^*+Gz}^*WW^*M_z^*W=S^*P^*.
\end{eqnarray*}
Now
\begin{eqnarray*}
S^*-SP^*
&=&
W^*M_{G^*+Gz}^*W-W^*M_{G^*+Gz}WW^*M_z^*W
\\
&=&
W^*(I \otimes G + M_z^* \otimes G^*)W-W^*(I \otimes G^* + M_z \otimes G)(M_z^* \otimes I)W
\\
&=&
W^*(I \otimes G + M_z^* \otimes G^*)W-W^*(M_z^* \otimes G^* + M_zM_z^* \otimes G)W
\\
&=&
W^*(P_{\mathbb{C}} \otimes G)W\;\;\;[\mbox{$P_{\mathbb{C}}$ is the projection of $H^2(\mathbb{D})$ onto constants.}]
\\
&=&
D_{P^*}GD_{P^*}.
\end{eqnarray*}
For all $\theta \in (0,2\pi]$, we have $G^*+e^{i\theta}G=e^{i\frac{\theta}{2}}(e^{-i\frac{\theta}{2}}G^*+e^{i\frac{\theta}{2}}G)$. Hence $\lVert G^*+e^{i\theta}G \rVert = \lVert (e^{-i\frac{\theta}{2}}G^*+e^{i\frac{\theta}{2}}G) \rVert.$
Note that for all $\theta \in (0,2\pi]$ and $\xi \in \mathcal{D}_{P^*}$ we have
\begin{eqnarray*}
\lvert\langle (e^{-i\frac{\theta}{2}}G^*+e^{i\frac{\theta}{2}}G)\xi, \xi\ \rangle \rvert &=& \lvert  e^{-i\frac{\theta}{2}}\langle G^*\xi,\xi\rangle + e^{i\frac{\theta}{2}}\langle G\xi,\xi \rangle \rvert
\\
&\leq&
\lvert \langle G^*\xi,\xi \rangle \rvert + \lvert \langle G\xi,\xi \rangle \rvert \leq 2. [\text{ since $w(G) \leq 1$}]
\end{eqnarray*}
Since $(e^{-i\frac{\theta}{2}}G^*+e^{i\frac{\theta}{2}}G)$ is a self adjoint operator, we have $\lVert (e^{-i\frac{\theta}{2}}G^*+e^{i\frac{\theta}{2}}G) \rVert \leq 2$. Therefore $\lVert (G^*+Gz) \rVert \leq 2$ for all $z \in \mathbb{D}$, which implies that $\lVert M_{G^*+Gz} \rVert \leq 2$. Hence $\lVert S \rVert \leq 2$.
\\
Hence $(S^*,P^*)$ is a commuting pair of operators on $\mathcal{H}$ such that the spectral radius of $S$ is not greater than two and the operator equation
$S^*-SP^*=D_{P^*}XD_{P^*}$ has a solution for X (namely $G$) with numerical radius of $X$ not greater than one. So $(S^*,P^*)$ is a $\Gamma$-contraction and hence so is $(S,P)$.
\\
Now we will show that $F$ is the fundamental operator of $(S,P)$.
Note that if $X$ is the fundamental operator of $(S,P)$, then by Theorem \ref{genthm} we have
$M_{\Theta_P}M_{X+X^*z}=M_{G^*+Gz}M_{\Theta_P}.$ Also by hypothesis we have $M_{\Theta_P}M_{F+F^*z}=M_{G^*+Gz}M_{\Theta_P}$. Since $P$ is pure contraction, $M_{\Theta_P}$ is an isometry and hence we have $M_{X+X^*z}=M_{F+F^*z}$ on $H^2_{\cl D_P}(\bb D)$. Which implies $X=F$. Therefore $F$ is the fundamental operator of $(S,P)$.
This completes the proof of the theorem. \qed

\begin{corollary}
Let $P$ be a pure contraction on a Hilbert space $\mathcal{H}$. Let $F \in \mathcal{B}(\mathcal{D}_P)$ and $G \in \mathcal{B}(\mathcal{D}_{P^*})$ be two operators with numerical radius not greater than one. If (\ref{adeq1}) holds, then the pair $(R, U)$ as defined in (\ref{R}) and (\ref{U}) is a $\Gamma$-unitary. \end{corollary}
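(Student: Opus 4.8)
The plan is to obtain the corollary as an immediate consequence of Theorem \ref{genthm1} together with the Known Theorem quoted in the introduction. First I would invoke Theorem \ref{genthm1}: since $P$ is pure and $F$, $G$ have numerical radii at most one and satisfy (\ref{adeq1}), there exists $S \in \mathcal{B}(\mathcal{H})$ such that $(S,P)$ is a $\Gamma$-contraction and $F$, $G$ are the fundamental operators of $(S,P)$ and $(S^*,P^*)$ respectively. Concretely, one may take $S = W^*M_{G^*+Gz}W$, with $W$ the isometry $h \mapsto \sum_{n\ge 0} z^n \otimes D_{P^*}P^{*n}h$, exactly as in the proof of Theorem \ref{genthm1}; this fixes the operator $S$ occurring in the $\mathcal{H}$-to-$\mathcal{H}$ slot of the matrix (\ref{R}), so that the pair $(R,U)$ defined by (\ref{R}) and (\ref{U}) is now completely determined by $F$, $G$ and $P$.

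Next I would apply the Known Theorem to this particular $\Gamma$-contraction $(S,P)$. Its hypotheses are met precisely because $F$ and $G$ are, by the previous step, the fundamental operators of $(S,P)$ and $(S^*,P^*)$; hence the block matrices appearing there are literally the matrices (\ref{R}) and (\ref{U}) of the corollary. The Known Theorem then asserts that $(R,U)$ is a $\Gamma$-unitary dilation of $(S,P)$; in particular $(R,U)$ is a pair of commuting normal operators whose joint spectrum lies in $b\Gamma$, that is, a $\Gamma$-unitary. This is exactly the assertion of the corollary.

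The only point that needs a word of care — and the only place a reader might look for an obstacle — is the implicit dependence of (\ref{R}) on $S$: the corollary's hypotheses do not mention $S$, so one must make explicit that $(R,U)$ is to be formed with the $S$ supplied by Theorem \ref{genthm1}, and check that with that choice the fundamental operators of $(S,P)$ and $(S^*,P^*)$ are indeed $F$ and $G$, so that the Known Theorem applies verbatim. Both of these are delivered by Theorem \ref{genthm1}, so no genuine difficulty remains. A fully self-contained argument, verifying normality, commutativity and the spectral condition for $(R,U)$ directly from the relations among $F$, $G$, $P$ (Lemmas \ref{adlem1} and \ref{adlem2} together with the purity of $P$), is possible but would merely duplicate the proof of the Known Theorem, and is therefore unnecessary.
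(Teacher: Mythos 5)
Your proposal is correct and follows exactly the paper's own argument: apply Theorem \ref{genthm1} to produce $S$ making $(S,P)$ a $\Gamma$-contraction with fundamental operators $F$ and $G$, then invoke the Known Theorem to conclude that $(R,U)$ is the $\Gamma$-unitary dilation of $(S,P)$, hence a $\Gamma$-unitary. Your extra remark about the implicit dependence of $(R,U)$ on the constructed $S$ is a fair point of bookkeeping but does not change the argument.
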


\begin{proof} Theorem 2 says that under these assumptions, there is an $S$ on $\mathcal H$ such that $(S, P)$ is a $\Gamma$-contraction, $F$ is the fundamental operator of $(S, P)$ and $G$ is the fundamental operator of $(S^*, P^*)$. Now, the Known Theorem of the Introduction section says that $(R, U)$ is the $\Gamma$-unitary dilation of $(S, P)$. \end{proof}

\section{The general case}
In this section we shall prove Theorem  \ref{genthm2} which is a
version of Theorem \ref{genthm1} that holds for the c.n.u. case. As
we noted when Theorem \ref{genthm2} was stated, certain background
concepts need to be developed. We first recall two minimal
isometric dilations of a c.n.u. contraction. Let $P\in \cl{B}(\cl
H)$ be a c.n.u. contraction.
\begin{enumerate}
\item[(i)] Note that
$$I \geq PP^* \geq P^2{P^*}^2 \geq \cdots \geq P^n{P^*}^n \geq \cdots \geq 0.$$
Therefore there exists a positive bounded operator, say
$P_{\infty}^2$, such that $P_{\infty}^2 h= \lim_{n\to
\infty}P^nP^{*n}h$ for all $h\in \cl H.$ Then
$PP_{\infty}^2P^* = P_{\infty}^2,$ which implies that
$||P_{\infty}h|| = ||P_{\infty}P^*h||$ for all $h.$ This
defines an isometry $T\in \cl{B}(\overline{Ran(P_{\infty}}))$
such that $TP_{\infty}=P_{\infty}P^*.$ Let $U\in \cl{B}(\cl
K)$ be the minimal unitary extension of $T.$ Then $\Pi_0 : \cl
H \to H^2_{\cl D_{P^*}}(\bb D) \oplus \cl K,$ defined as
$$
\Pi_0(h)
=
\left(
\begin{array}{c}
Wh\\
P_{\infty}
\end{array}
\right),
$$
is an isometry, where $W:\cl H \to H^2_{\cl D_{P^*}}(\bb D), \ W(h)=\sum_{n=0}^\infty z^nD_{P^*}P^{*n}h.$ We can check that
$\left(
\begin{array}{cc}
M_z\otimes I & 0\\
0 & U^*
\end{array}
\right)$ is a minimal isometric dilation of $\Pi_0 P\Pi_0^*$ and
$$\Pi_0P^*
=
\left(
\begin{array}{cc}
M_z\otimes I & 0\\
0 & U^*
\end{array}
\right)^* \Pi_0.
$$
\\
\item[(ii)]
Let
$$\Theta _P(z) =
[-P + \sum_{n=0}^{\infty} z^{n+1}D_{P^*}P^{*n}D_P]|_{\cl D_P} \text{ for all }z \in \mathbb{D}$$ be the characteristic function of $P$. For all $t \in [0,2\pi)$ define the operator
$$\Delta_P(t)=[I-{\Theta_P(e^{it})}^*\Theta_P(e^{it})]^\frac{1}{2}$$
and the subspace
$$\cl S_P = \{M_{\Theta_P}f\oplus \Delta_Pf:
f\in H^2_{\cl D_P}(\bb D)\}.$$ Then $\cl S_P$ is a closed
subspace of $H^2_{\cl D_{P^*}}(\bb D)\oplus
\overline{\Delta_PL^2_{\cl D_P}(\bb T)}$.
Let $\cl Q_P$ be the orthogonal complement of $\cl S_P$ in $H^2_{\cl D_{P^*}(\bb D)}\oplus \overline{\Delta_PL^2_{\cl D_P}(\bb T)}.$

There exists an isometry $\Pi : \cl H \to H^2_{\cl D_{P^*}}(\bb D)\oplus \overline{\Delta_PL^2_{\cl D_P}(\bb T)}$ with
$\Pi(\cl H)=\cl Q_P$ such that
$\left(
\begin{array}{cc}
M_z & 0\\
0 & M_{e^{it}}
\end{array}
\right)$ is a minimal isometric dilation of $\Pi P\Pi^*$
and
\begin{eqnarray}\label{cont}
\Pi P^*
=
\left(
\begin{array}{cc}
M_z & 0\\
0 & M_{e^{it}}
\end{array}
\right)^* \Pi.
\end{eqnarray}
\end{enumerate}

Thus $\Pi$ and $\Pi_0$ give two minimal isometric dilations of
$P$. But the minimal dilation is unique up to unitary equivalence.
Thus we get a unitary $\Phi:H^2_{\cl D_{P^*}}(\bb D)\oplus
\overline{\Delta_PL^2_{\cl D_P}(\bb T)} \longrightarrow H^2_{\cl
D_{P^*}}(\bb D)\oplus \cl K,$ such that $\Phi\Pi = \Pi_0$ and
\begin{eqnarray}\label{*1}
\Phi
\left(
\begin{array}{cc}
M_z & 0\\
0 & M_{e^{it}}
\end{array}
\right)^*
=
\left(
\begin{array}{cc}
M_z\otimes I & 0\\
0 & U^*
\end{array}
\right)^*\Phi.
\end{eqnarray}
Since $\Phi$ is unitary and satisfies (\ref{*1}), by an easy
matrix calculation and the fact that any operator intertwining a
pure isometry and a unitary is zero(Lemma 2.5 in \cite{j.o.t}), we
get $\Phi$ to be of the form
\begin{eqnarray}\label{lastnamed}
\Phi
=
\left(
\begin{array}{cc}
I \otimes V_1 & 0\\
0 & V_2
\end{array}
\right)
\end{eqnarray}
 where $V_1\in \cl{B}(\cl D_{P^*})$ and $V_2\in \cl{B}(\overline{\Delta_PL^2_{\cl D_P}(\bb T)}, \cl K)$ are
 unitary operators.

\begin{lemma}\label{Bresult} Let $P$ be a c.n.u. $\Gamma$-contraction on $\cl H.$
Let $X\in \cl{B}(\cl D_{P^*}), \ w(X)\le 1$ and $R\in \cl{B}(\overline{\Delta_PL^2_{\cl D_P}(\bb T)})$ such that
$(R,M_{e^{it}})$ is a $\Gamma$-unitary on $\overline{\Delta_PL^2_{\cl D_P}(\bb T)}$.
If
\begin{equation}\label{cnu6}
\left(
\begin{array}{cc}
M_{X^*+z X} & 0\\
0 & R
\end{array}
\right)\cl S_P \subseteq \cl S_P,
\end{equation}
then there exists $Y\in \cl{B}(\cl D_P)$ with $w(Y)\le 1$ such that
\begin{equation*}
\left(
\begin{array}{cc}
M_{X^*+zX} & 0\\
0 & R
\end{array}
\right)
\left(
\begin{array}{c}
M_{\Theta_P}\\
\Delta_P
\end{array}
\right)
=
\left(
\begin{array}{c}
M_{\Theta_P}\\
\Delta_P
\end{array}
\right)
M_{Y+zY^*}.
\end{equation*}
\end{lemma}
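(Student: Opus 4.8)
The plan is to analyze the inclusion (\ref{cnu6}) componentwise against the defining description of $\cl S_P$ as the range of the isometry $\begin{pmatrix} M_{\Theta_P} \\ \Delta_P \end{pmatrix}$ acting on $H^2_{\cl D_P}(\bb D)$. A general element of $\cl S_P$ has the form $M_{\Theta_P}f \oplus \Delta_P f$ for $f \in H^2_{\cl D_P}(\bb D)$; applying the block-diagonal operator on the left sends it to $M_{X^*+zX}M_{\Theta_P}f \oplus R\Delta_P f$, and by hypothesis this again lies in $\cl S_P$, so there is a (unique, by injectivity of the defining isometry) $g \in H^2_{\cl D_P}(\bb D)$ with $M_{X^*+zX}M_{\Theta_P}f = M_{\Theta_P}g$ and $R\Delta_P f = \Delta_P g$. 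Writing $g = L f$ for the resulting linear map $L$, this already shows $L$ is bounded (composition of bounded maps with a bounded left inverse) and that $L$ intertwines appropriately; the task is to identify $L$ as $M_{Y+zY^*}$ for some $Y \in \cl{B}(\cl D_P)$ with $w(Y) \le 1$.

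First I would pin down the commutation of $L$ with $M_z$ (equivalently $M_{e^{it}}$ on the second coordinate). Since $(R, M_{e^{it}})$ is a $\Gamma$-unitary, $R$ commutes with $M_{e^{it}}$; and $M_{X^*+zX}$ commutes with $M_z$. From $M_{\Theta_P}(zg - \text{stuff})$... more precisely, applying $M_z$ on the left of both defining relations and using these commutations, one gets $M_{\Theta_P}(M_z L f) = M_{\Theta_P}(L M_z f)$ and similarly on the $\Delta_P$ side, hence $M_z L = L M_z$ on $H^2_{\cl D_P}(\bb D)$ since $\begin{pmatrix} M_{\Theta_P}\\ \Delta_P\end{pmatrix}$ is injective. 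An operator on $H^2_{\cl D_P}(\bb D)$ commuting with $M_z$ is a multiplication operator $M_\varphi$ by a bounded $\cl{B}(\cl D_P)$-valued analytic function $\varphi$. The next step is to show $\varphi(z) = Y + zY^*$ for a single operator $Y$: this is exactly the structure one expects from a fundamental-operator-type equation, and it should come out of the fact that $R$, being one half of a $\Gamma$-unitary on the $L^2$-space, has the form $M_{\psi}$ with $\psi(e^{it})$ related to $\psi(e^{it})^*$ through the bidisc symmetry (a $\Gamma$-unitary's first coordinate on $L^2_{\cl E}(\bb T)$ is $M_{\psi}$ with $\psi(e^{it}) = \Xi^* + e^{it}\Xi$ for some $\Xi$ with $w(\Xi)\le 1$, $\Xi$ commuting with an associated unitary — this is the standard description of $\Gamma$-unitaries). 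Matching the second coordinate $R\Delta_P f = \Delta_P L f$ then forces the coefficients of $\varphi$ to have the shape $Y + zY^*$, and matching against $w(X)\le 1$ and the $\Gamma$-unitary constraint on $R$ should give $w(Y)\le 1$ (e.g. via the self-adjointness trick used in the proof of Theorem \ref{genthm1}: $Y^* + e^{i\theta}Y$ is a compression/restriction of the self-adjoint $X^* + e^{i\theta}X$ or of $R$ evaluated on the circle, hence has norm $\le 2$, which is equivalent to $w(Y)\le 1$).

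I expect the main obstacle to be the bound $w(Y)\le 1$, not the algebraic identification of $L$ as $M_{Y+zY^*}$. The intertwining relation identifies $Y + zY^*$ as (essentially) a restriction of $X^* + zX$ to the invariant subspace $\cl S_P$ compressed back through $M_{\Theta_P}$, together with a matching restriction of $R$; but $\cl S_P$ is not invariant for $M_z$ in a way that makes the numerical-radius transfer immediate, so one has to pass to the boundary $\bb T$, use that on $L^2$ the relevant operators become genuine multiplications, and exploit that numerical radius of a multiplication operator $M_{A^* + e^{it}A}$ equals $\sup_t w(A^* + e^{it}A)/\!\sim 2$ tied to $w(A)\le 1$. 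Carefully transferring "$w(X)\le 1$ and $R$ half of a $\Gamma$-unitary" through the isometry to "$w(Y)\le 1$" is the delicate point; everything else is a bounded-operator bookkeeping argument resting on the injectivity of $\begin{pmatrix} M_{\Theta_P}\\ \Delta_P\end{pmatrix}$ and the commutant-of-$M_z$ description of multiplication operators.
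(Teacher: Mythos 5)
Your setup (defining $T=L$ via the invariance of $\cl S_P$, noting its boundedness, and proving $TM_z=M_zT$ from the injectivity of the column isometry $\bigl(\begin{smallmatrix} M_{\Theta_P}\\ \Delta_P\end{smallmatrix}\bigr)$) coincides with the paper's argument. But the heart of the lemma --- identifying the commutant element $M_\varphi$ as $M_{Y+zY^*}$ --- is exactly the step you leave to hand-waving, and the route you sketch for it does not work. You propose to read off the shape of $\varphi$ by matching the second coordinate $R\Delta_P f=\Delta_P Lf$ against a representation $R=M_{\Xi^*+e^{it}\Xi}$ for a single operator $\Xi$. Two problems: first, that representation is the Agler--Young model of a \emph{pure $\Gamma$-isometry} on $H^2_{\cl E}(\bb D)$, not of a $\Gamma$-unitary on $\overline{\Delta_PL^2_{\cl D_P}(\bb T)}$; an operator commuting with $M_{e^{it}}$ on that space is only a decomposable (fiberwise, measurably varying) multiplication, and a $\Gamma$-unitary $(R,M_{e^{it}})$ gives $\psi(e^{it})=\psi(e^{it})^*e^{it}$ fiberwise, with no single $\Xi$ in general. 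Second, $\Delta_P$ alone is not injective --- it vanishes identically when $P$ is pure --- so no coefficient matching can be carried out through the second coordinate; only the full column is an isometry. So the identification of $\varphi(z)=Y+zY^*$, and with it $w(Y)\le 1$, is genuinely missing.

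The paper closes this gap by an algebraic relation you never derive: besides $\|T\|\le 2$ (which you get as the paper does, from $w(X)\le 1$ and $\|R\|\le 2$) and $TM_z=M_zT$, one shows $M_z^*T=T^*$, i.e.\ $T=T^*M_z$. This follows by writing $T$ as the compression $\bigl(\begin{smallmatrix} M_{\Theta_P}\\ \Delta_P\end{smallmatrix}\bigr)^{\!*}\operatorname{diag}(M_{X^*+zX},R)\bigl(\begin{smallmatrix} M_{\Theta_P}\\ \Delta_P\end{smallmatrix}\bigr)$, pulling $M_z^*$ through the column (which intertwines $M_z$ with $\operatorname{diag}(M_z,M_{e^{it}})$), and using the identities $M_z^*M_{X^*+zX}=M_{X^*+zX}^*$ and $M_{e^{it}}^*R=R^*$, the latter being precisely the $\Gamma$-unitary relation for $(R,M_{e^{it}})$ --- this is where that hypothesis enters, not through any multiplier model of $R$. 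The three facts $\|T\|\le 2$, $TM_z=M_zT$, $T=T^*M_z$ say $(T,M_z)$ is a $\Gamma$-isometry with pure second component, and the Agler--Young structure theorem \cite{j.o.t} then yields in one stroke both the form $T=M_{Y+zY^*}$ and $w(Y)\le 1$ (so, contrary to your assessment, the numerical radius bound is not the delicate point once the $\Gamma$-isometry relation is in hand; alternatively it also drops out of $\|T\|\le 2$ once the form is known). As written, your proposal lacks this key relation and substitutes for it an argument that fails in the pure case and rests on a misstated model of $\Gamma$-unitaries.
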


\begin{proof} Equation (\ref{cnu6}) allows us to define an operator
$T\in \cl{B}(H_{\cl D_P}^2(\bb D))$ so that
\begin{equation}\label{cnu7}
\left(
\begin{array}{cc}
M_{X^*+zX} & 0\\
0 & R
\end{array}
\right)
\left(
\begin{array}{c}
M_{\Theta_P}\\
\Delta_P
\end{array}
\right)
=
\left(
\begin{array}{c}
M_{\Theta_P}\\
\Delta_P
\end{array}
\right)
T.
\end{equation}
In other words,
\begin{equation}
T =
\left(
\begin{array}{c}
M_{\Theta_P}\\
\Delta_P
\end{array}
\right)^*
\left(
\begin{array}{cc}
M_{X^*+zX} & 0\\
0 & R
\end{array}
\right)
\left(
\begin{array}{c}
M_{\Theta_P}\\
\Delta_P
\end{array}
\right)
\end{equation}

To prove the result, it is enough to show that $(T,M_z)$ is a $\Gamma$- isometry. Since $w(X)\le 1$, as shown in the previous section, we have $ ||M_{X^*+zX}||\le 2.$ Also, $(R,M_{e^{it}})$ is
a $\Gamma$-unitary, therefore $||R||\le 2.$ Thus, from Equation (\ref{cnu7}), we can easily deduce that $||T||\le 2,$ since the operator
$\left(
\begin{array}{c}
M_{\Theta_P}\\
\Delta_P
\end{array}
\right)$
is an isometry. We shall now show that $T$ commutes with $M_z.$

From equation (\ref{cnu7}) we have
\begin{eqnarray}
M_{X^*+zX}M_{\Theta_P} &=& M_{\Theta_P}T\label{cnu8}\\
R\Delta_P &=& \Delta_PT.\label{cnu9}
\end{eqnarray}

Note that $M_z$ commute with $M_{X^*+zX}$ and $M_{\theta_P}.$ Therefore
applying $M_z$ on both sides of Equation (\ref{cnu8}) we get

\begin{equation}\label{cnu10}
M_{\Theta_P}TM_z = M_{\Theta_P}M_zT.
\end{equation}

Also, $M_{e^{it}}|_{\overline{\Delta_PL^2_{\cl D_P}(\bb T)}}$ commutes with $R$ and $\Delta_P,$ therefore
applying $M_{e^{it}}$ on both sides of Equation (\ref{cnu9}) we get

\begin{equation}\label{cnu11}
\Delta_PTM_z = \Delta_PM_zT.
\end{equation}

Equations (\ref{cnu10}) and (\ref{cnu11}) together with the fact that
$
\left(
\begin{array}{c}
M_{\Theta_P}\\
\Delta_P
\end{array}
\right)
$
is an isometry yield $TM_z = M_zT.$

Lastly, we shall show that
$T = T^*M_z.$
To accomplish this, consider
\begin{eqnarray*}
M_z^* T
&=&
M_z^*
\left(
\begin{array}{c}
M_{\Theta_P}\\
\Delta_P
\end{array}
\right)^*
\left(
\begin{array}{cc}
M_{X^*+zX} & 0\\
0 & R
\end{array}
\right)
\left(
\begin{array}{c}
M_{\Theta_P}\\
\Delta_P
\end{array}
\right)\\
&=&
\left(
\begin{array}{c}
M_{\Theta_P}\\
\Delta_P
\end{array}
\right)^*
\left(
\begin{array}{cc}
M_z^* & 0\\
0 & M_{e^{it}}^*
\end{array}
\right)
\left(
\begin{array}{cc}
M_{X^*+zX} & 0\\
0 & R
\end{array}
\right)
\left(
\begin{array}{c}
M_{\Theta_P}\\
\Delta_P
\end{array}
\right)\\
&=& T^*.
\end{eqnarray*}
Consequently, $M_z^*T=T^*,$ that is, $T=T^*M_z.$ Therefore we can
conclude that $(T,M_z)$ is a $\Gamma$-isometry. Agler and Young
showed in \cite{j.o.t} that the only way this can happen is that
$T$ is of the form $M_{Y+zY^*}$ for some $Y\in \cl{B}(\cl D_P), \
w(Y)\le 1$. This completes the proof.
\end{proof}

The next result, apart from its usefulness in proving the main theorem of this section, is interesting in its own right and depends on the beautiful model theory for a $\Gamma$-contraction developed by Agler and Young in \cite{j.o.t}. They proved, by a Stinespring like method, that if $(S,P)$ is a $\Gamma$-contraction on a Hilbert space $\cl{H}$, then $\cl{H}$ can be isometrically embedded in a Hilbert space $\cl{K}$ (by an isometry $\Pi_{AY}$, say) on which a $\Gamma$-isometry $(\tilde{S},\tilde{P})$ acts such that the isometric image of $\cl{H}$ is a common invariant subspace of $\tilde{S}^*$ and $\tilde{P}^*$ and
$$\Pi_{AY} S^*=\tilde{S}^*|_{\Pi_{AY} \cl{H}},\; \Pi_{AY} P^*=\tilde{P}^*|_{\Pi_{AY} \cl{H}}.$$
Moreover, the $\Gamma$-isometry $(\tilde{S},\tilde{P})$ has a Wold decomposition, viz., $\cl{K}$ has an orthogonal decomposition $\cl{K}_1 \oplus \cl{K}_2$ such that $\cl{K}_1$ and $\cl{K}_2$ reduce both $\tilde{S}$ and $\tilde{P}$, the pair $(\tilde{S}|_{\cl{K}_1}, \tilde{P}|_{\cl{K}_1})$ is a pure $\Gamma$-isometry and
$$ (\tilde{S}_u, \tilde{P}_u) \bydef (\tilde{S}|_{\cl{K}_2}, \tilde{P}|_{\cl{K}_2})$$ is a $\Gamma$-unitary. In addition to this, the structure of a pure $\Gamma$-isometry was completely deciphered by them. It is as follows. There exists a Hilbert space $\cl{E}$ and a bounded operator $Y$ on $\cl{E}$ such that $w(Y) \le 1$ and $(\tilde{S}|_{\cl{K}_1}, \tilde{P}|_{\cl{K}_1})$ is unitarily equivalent to $(T_{\psi} , T_{z})$ acting on $H^2_{\cl{E}}(\mathbb{D})$, where $\psi \in L^{\infty}(\cl{B}(\cl{E}))$ is given by $\psi(z) = Y^* + Yz$ for all $z \in \mathbb{T}$. In short,
\begin{equation}\label{kn}
\Pi_{AY}S^*= \left(
\begin{array}{cc}
M_{Y^*+zY} & 0\\
0 & \tilde{S}_u
\end{array}
\right)^* \Pi_{AY} \text{ and } \Pi_{AY}P^*=\left( \begin{array}{cc}
M_z & 0\\
0 & \tilde{P}_u
\end{array}  \right)^*\Pi_{AY}.
\end{equation}
Let $P$ be a c.n.u. contraction and $\Pi$ be as above. Then in Theorem 4.1 of \cite{JS}, Sarkar showed that there is a unique isometry $\Psi: H^2_{\cl D_{P^*}}(\bb D)\oplus
\overline{\Delta_PL^2_{\cl D_P}(\bb T)} \to \cl{K}_1 \oplus \cl{K}_2$ such that $\Pi_{AY} = \Psi \Pi$. Indeed, $\Psi$ is defined by sending $\Pi h$ to $\Pi_{AY}h$. What Sarkar showed next is significant for our purpose, viz., $\Psi$ is of the form
$(I_{H^2(\mathbb{D})} \otimes \hat V_1) \oplus \hat V_2$, for some isometries $\hat V_1 \in \cl{B}(\cl{D}_{P^*}, \cl{E})$ and $\hat V_2 \in \cl{B}(\overline{\Delta_PL^2_{\cl D_P}(\bb T)},K_2)$.
Taking all this into account, we have from (\ref{kn}),
\begin{eqnarray*}
\Pi S^* & = & \left((I_{H^2(\mathbb{D})} \otimes \hat V^*_1) \oplus \hat V^*_2\right)\left( (I_{H^2(\mathbb{D})} \otimes Y^* + M_z \otimes Y) \oplus \tilde{S}_{u} \right)^*\left((I_{H^2(\mathbb{D})} \otimes \hat V_1) \oplus \hat V_2\right)\Pi
\\
 & = & \left( (I_{H^2(\mathbb{D})} \otimes \hat V^*_1Y^*\hat V_1 + M_z \otimes \hat V^*_1Y\hat V_1) \oplus \hat V^*_2\tilde{S}_{u}\hat V_2 \right)^*\Pi
\end{eqnarray*}
Therefore writing $X=\hat V^*_1Y\hat V_1$ and $R=\hat V^*_2\tilde{S}_{u}\hat V_2$, we get the following neat relation
\begin{eqnarray}\label{gammacont}
\Pi S^*=
\left(
\begin{array}{cc}
M_{X^*+zX} & 0\\
0 & R
\end{array}
\right)^*\Pi
\end{eqnarray}
for some operator $X\in \cl{B}(\cl D_{P^*})$ with $w(X)\le 1$ and $R\in \cl{B}(\overline{\Delta_PL^2_{\cl D_P}(\bb T)})$ such that $(R,M_{e^{it}}|_{\overline{\Delta_PL^2_{\cl D_P}(\bb T)}})$ is a $\Gamma$-unitary on $\overline{\Delta_PL^2_{\cl D_P}(\bb T)}$.
We are going to see that $X$ is unitarily equivalent to the fundamental operator of $(S^*,P^*)$.
 Using (\ref{gammacont}) and (\ref{cont}) we get
\begin{eqnarray*}
S^*-SP^*
&=&
\Pi^*
\left(
\begin{array}{cc}
M_{X^*+z X} & 0\\
0 & R
\end{array}
\right)^*\Pi \\
&-&
\Pi^*
\left(
\begin{array}{cc}
M_{X^*+z X} & 0\\
0 & R
\end{array}
\right)
\Pi \ \Pi^*
\left(
\begin{array}{cc}
M_z & 0\\
0 & M_{e^{it}}
\end{array}
\right)^*\Pi
\\
&=& \Pi^*\
\left(
\begin{array}{cc}
P_{\bb C}\otimes X & 0\\
0 & 0
\end{array}
\right)\Pi  \;\;\text{  [since $(R,M_{e^{it}}|_{\overline{\Delta_PL^2_{\cl D_P}(\bb T)}})$ is a $\Gamma$-unitary.]}
\\
&=& \Pi_0^*\
\left(
\begin{array}{cc}
P_{\bb C}\otimes (V_1XV_1^*) & 0\\
0 & 0
\end{array}
\right)\Pi_0\\
&=& D_{P^*} (V_1XV_1^*) D_{P^*}.
\end{eqnarray*}
Therefore $G=V_1XV_1^*$ is the fundamental operator of
$(S^*,P^*)$. By equation (\ref{gammacont}) we have that $\Pi
\cl{H}=\cl{Q}_P$ is an invariant subspace for $ \left(
\begin{array}{cc}
M_{X^*+zX} & 0\\
0 & R
\end{array}
\right)^*
$.
In other words, $\cl{S}_P = {\cl{Q}_P}^{\perp}$ is invariant under
$\left(
\begin{array}{cc}
M_{X^*+zX} & 0\\
0 & R
\end{array}
\right).$ Hence, using Lemma \ref{Bresult}, we have proved the following.

\begin{lemma} Let $(S,P)$ be a c.n.u. $\Gamma$-contraction. Then there exists $Y\in \cl{B}(\cl D_P)$ with $w(Y)\le 1$ such that
\begin{equation*}
\left(
\begin{array}{cc}
M_{X^*+zX} & 0\\
0 & R
\end{array}
\right)
\left(
\begin{array}{c}
M_{\Theta_P}\\
\Delta_P
\end{array}
\right)
=
\left(
\begin{array}{c}
M_{\Theta_P}\\
\Delta_P
\end{array}
\right)
M_{Y+zY^*},
\end{equation*} where $X$ in the representation of $S$, i.e., Equation (\ref{gammacont}), is unitarily equivalent to the fundamental
operator for $(S^*,P^*).$
\end{lemma}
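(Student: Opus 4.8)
The plan is to recognize that this statement is simply the combined output of the material built up in this section: the construction behind Equation (\ref{gammacont}) already furnishes precisely the hypotheses of Lemma \ref{Bresult}, so the argument is an assembly rather than a new proof. Concretely, I would first record the data produced along with (\ref{gammacont}): an operator $X \in \cl B(\cl D_{P^*})$ with $w(X) \le 1$ and an operator $R$ with $(R, M_{e^{it}})$ a $\Gamma$-unitary on $\overline{\Delta_P L^2_{\cl D_P}(\bb T)}$ satisfying
\[
\Pi S^* = \begin{pmatrix} M_{X^*+zX} & 0 \\ 0 & R \end{pmatrix}^* \Pi .
\]
From this intertwining relation, $\Pi\cl H = \cl Q_P$ is invariant under the adjoint of $\begin{pmatrix} M_{X^*+zX} & 0 \\ 0 & R \end{pmatrix}$, hence $\cl S_P = \cl Q_P^{\perp}$ is invariant under $\begin{pmatrix} M_{X^*+zX} & 0 \\ 0 & R \end{pmatrix}$ itself. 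That is exactly condition (\ref{cnu6}) of Lemma \ref{Bresult} with its ``$X$'' taken to be the present $X$; applying that lemma yields $Y \in \cl B(\cl D_P)$ with $w(Y) \le 1$ for which the asserted identity holds.

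For the unitary-equivalence assertion I would reuse the computation of $S^* - SP^*$ already carried out: transporting (\ref{gammacont}) and (\ref{cont}) through $\Pi_0$ and the unitary $\Phi = \begin{pmatrix} I \otimes V_1 & 0 \\ 0 & V_2 \end{pmatrix}$ of (\ref{lastnamed}), and using that the $R$-block contributes nothing because $(R, M_{e^{it}})$ is a $\Gamma$-unitary, one gets $S^* - SP^* = D_{P^*}(V_1 X V_1^*) D_{P^*}$. By the uniqueness of the solution of the fundamental equation, i.e.\ Lemma \ref{adlem1} applied to the $\Gamma$-contraction $(S^*, P^*)$, this forces the fundamental operator of $(S^*, P^*)$ to be $G = V_1 X V_1^*$, and since $V_1$ is unitary, $X$ is unitarily equivalent to that fundamental operator.

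I do not expect a genuine obstacle in this final assembly; the real work lies upstream, in producing (\ref{gammacont}) itself from Sarkar's matching isometry $\Psi$ between the two minimal isometric dilations $\Pi$ and $\Pi_0$ and from the Agler--Young identification of a pure $\Gamma$-isometry with $(M_{Y^*+zY}, M_z)$. The only points requiring care at this stage are bookkeeping: that $\Phi$ carries the $H^2$-summand to the $H^2$-summand by a genuinely \emph{unitary} operator $V_1$ (this is where the fact that any operator intertwining a pure isometry and a unitary must vanish is used), and that the compression defining $R$ isolates exactly the constant-term contribution on the first coordinate, so that the $P_{\bb C} \otimes X$ block, and nothing else, survives in $S^* - SP^*$.
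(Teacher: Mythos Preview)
Your proposal is correct and follows essentially the same route as the paper: the paper likewise observes that (\ref{gammacont}) makes $\cl Q_P=\Pi\cl H$ invariant under the adjoint of $\begin{pmatrix} M_{X^*+zX} & 0 \\ 0 & R\end{pmatrix}$, so $\cl S_P$ is invariant under that operator, and then invokes Lemma \ref{Bresult}; the unitary equivalence of $X$ with the fundamental operator of $(S^*,P^*)$ is obtained from exactly the $S^*-SP^*=D_{P^*}(V_1XV_1^*)D_{P^*}$ computation you cite. One small correction: the uniqueness you need is that of the solution to the fundamental equation $S^*-SP^*=D_{P^*}XD_{P^*}$ (stated in the Introduction), not Lemma \ref{adlem1}, which characterizes the fundamental operator via the different identity $D_PS=FD_P+F^*D_PP$.
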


The following result reveals a beautiful and useful relation
between the operators $S, \ P$ and $P_{\infty},$ when $(S,P)$ is a special $\Gamma$-contraction.

\begin{lemma}
Let $(S,P)$ be a c.n.u. $\Gamma$-contraction such that
$R= M_{e^{it}}+I=M_{e^{it}+I}$ in the representation (\ref{gammacont}) of $S$, then
$$
P_{\infty}^2+PP_{\infty}^2-PP_{\infty}^2S^* = 0.
$$
\end{lemma}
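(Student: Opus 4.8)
The plan is to prove the identity by working inside the minimal isometric dilation $\Pi_0$ of $P$ constructed above (the isometry $\Pi_0 h=(Wh,P_\infty h)$ into $H^2_{\cl D_{P^*}}(\bb D)\oplus\cl K$), expressing each of $P_\infty^2$, $PP_\infty^2$ and $PP_\infty^2S^*$ in the form $\Pi_0^*(\text{a }2\times 2\text{ block operator})\Pi_0$ and then observing that the three block operators add up to zero.

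First I would record two elementary consequences of $\Pi_0 h=(Wh,P_\infty h)$ and of $\Pi_0$ being isometric. Let $Q$ be the orthogonal projection of $H^2_{\cl D_{P^*}}(\bb D)\oplus\cl K$ onto the second summand $\cl K$. Then $\langle\Pi_0^*Q\Pi_0 h,h'\rangle=\langle P_\infty h,P_\infty h'\rangle=\langle P_\infty^2 h,h'\rangle$, so $P_\infty^2=\Pi_0^*Q\Pi_0$. Next, taking adjoints in $\Pi_0P^*=\bigl(\begin{smallmatrix}M_z\otimes I&0\\0&U^*\end{smallmatrix}\bigr)^*\Pi_0$ gives $P\Pi_0^*=\Pi_0^*\bigl(\begin{smallmatrix}M_z\otimes I&0\\0&U^*\end{smallmatrix}\bigr)$, and since $\bigl(\begin{smallmatrix}M_z\otimes I&0\\0&U^*\end{smallmatrix}\bigr)Q=\bigl(\begin{smallmatrix}0&0\\0&U^*\end{smallmatrix}\bigr)$, this gives $PP_\infty^2=\Pi_0^*\bigl(\begin{smallmatrix}0&0\\0&U^*\end{smallmatrix}\bigr)\Pi_0$ and hence $P_\infty^2+PP_\infty^2=\Pi_0^*\bigl(\begin{smallmatrix}0&0\\0&I+U^*\end{smallmatrix}\bigr)\Pi_0$.

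The substantive step is to obtain the matching expression for $\Pi_0 S^*$. Using $\Pi_0=\Phi\Pi$ with $\Phi$ unitary of the block form (\ref{lastnamed}), the representation (\ref{gammacont}) gives $\Pi_0 S^*=\Phi\bigl(\begin{smallmatrix}M_{X^*+zX}&0\\0&R\end{smallmatrix}\bigr)^*\Phi^*\Pi_0$, and since $\Phi$ is block-diagonal so is this conjugate. Its $(1,1)$ entry works out to $M_{G^*+zG}^*$ with $G=V_1XV_1^*$ (the fundamental operator of $(S^*,P^*)$, as already noted), but for the proof only the $(2,2)$ entry $V_2R^*V_2^*$ is needed. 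This is where the hypothesis $R=M_{e^{it}+I}$ enters: then $R^*=M_{e^{-it}}+I$, while comparing the lower-right entries in (\ref{*1}) together with (\ref{lastnamed}) gives $U=V_2M_{e^{-it}}V_2^*$, so that $V_2R^*V_2^*=U+I$ because $V_2$ is unitary. Thus $\Pi_0 S^*=\bigl(\begin{smallmatrix}M_{G^*+zG}^*&0\\0&U+I\end{smallmatrix}\bigr)\Pi_0$.

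Finally I would assemble the pieces: using the previous two paragraphs and $U^*U=I$,
\[
PP_\infty^2S^*=\Pi_0^*\bigl(\begin{smallmatrix}0&0\\0&U^*\end{smallmatrix}\bigr)\bigl(\begin{smallmatrix}M_{G^*+zG}^*&0\\0&U+I\end{smallmatrix}\bigr)\Pi_0=\Pi_0^*\bigl(\begin{smallmatrix}0&0\\0&I+U^*\end{smallmatrix}\bigr)\Pi_0=P_\infty^2+PP_\infty^2,
\]
whence $P_\infty^2+PP_\infty^2-PP_\infty^2S^*=0$. I expect the only real obstacle to be the bookkeeping in the substantive step, namely checking that conjugation by $\Phi$ keeps the operator block-diagonal and correctly identifying its lower-right corner as $U+I$; this is short once the special form (\ref{lastnamed}) of $\Phi$ and the hypothesis on $R$ are both used, and it is the one place where those two facts are essential.
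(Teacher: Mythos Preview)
Your proof is correct and follows essentially the same approach as the paper: both transfer the problem to the $\Pi_0$ dilation and use the conjugation by $\Phi$ together with the hypothesis $R=M_{e^{it}+I}$ to identify the lower-right block of the dilation of $S^*$ as $U+I$. Your execution is slightly more direct—you isolate $P_\infty^2=\Pi_0^*Q\Pi_0$ and compute each term of the identity separately—whereas the paper instead evaluates the auxiliary combination $P^*+PP^*-PP^*S^*$ in two ways and cancels; but the substance is the same.
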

\begin{proof} Let $R=M_{e^{it}+I}.$
Using relations (\ref{cont}), (\ref{*1}), (\ref{gammacont}) and $\Phi\Pi = \Pi_0$
we can write
$$
S=
\Pi_0^*\left(
\begin{array}{cc}
M_{G^*+z G}& 0\\
0 & U^*+I
\end{array}
\right)
\Pi_0
\
\
\text{and}
\
\
P=
\Pi_0^*\left(
\begin{array}{cc}
M_{z}& 0\\
0 & U^*
\end{array}
\right)\Pi_0,
$$
where $G=V_1XV_1^*.$


Consider
\begin{eqnarray*}
P^*+PP^*-PP^*S^*
&=& \Pi_0^*
\left(
\begin{array}{cc}
M_z^* & 0\\
0 & U
\end{array}
\right)\Pi_0
+\Pi_0^*
\left(
\begin{array}{cc}
M_zM_z^* & 0\\
0 & I
\end{array}
\right)\Pi_0\\
&& -
\Pi_0^*
\left(
\begin{array}{cc}
M_zM_z^*M_{G^*+zG}^* & 0\\
0 & U+I
\end{array}
\right)\Pi_0.
\end{eqnarray*}
Applying the definition of $\Pi_0,$ we get
$$
P^*+PP^*-PP^*S^*=P^*+PP^*-PP^*S^*-P_{\infty}^2P^*-P_{\infty}^2
+P_{\infty}^2S^*.$$
Hence,
$P_{\infty}^2P^*+P_{\infty}^2-P_{\infty}^2S^*=0,$ or equivalently,
$P_{\infty}^2+PP_{\infty}^2-PP_{\infty}^2S^*=0$
\end{proof}

We are now in a position to  prove the main result of this section.
\\
\\
\underline{\textit{Proof of Theorem \ref{genthm2}}.} We have seen
that if $(S,P)$ is a c.n.u. $\Gamma$-contraction and $S$ has the
form (\ref{gammacont}), then $S^*-SP^*=D_{P^*}V_1XV_1^*D_{P^*}$
where $X$ is as above. Thus, $V_1XV_1^*$ is the fundamental
operator of $(S^*,P^*).$ Let $G=V_1XV_1^*$ and $F$ denote the
fundamental operator for $(S,P).$ Then by Theorem \ref{genthm}, we
have
\begin{equation}\label{eqn3}
M_{\Theta_P}M_{F+zF^*}=M_{G^*+zG}M_{\Theta_P}.
\end{equation}
We claim that
\begin{equation}\label{eqn4}
M_{e^{it}+I}\Delta_P = \Delta_P M_{F+zF^*}
\end{equation}
As $\Delta_P$ commutes with $M_{e^{it}+I}$ and $\Delta_P$ is non-negative,
therefore Equation (\ref{eqn4}) is equivalent to
\begin{equation}\label{eqn5}
\Delta_P^2M_{e^{it}+I} = \Delta_P^2 M_{F+zF^*}.
\end{equation}

Using the fact that
$$
\Delta_P(t) = [1- \Theta_P(e^{it})^*\Theta_P(e^{it})]^\frac{1}{2}
$$
 and the representation
 $$
 \Theta_P(e^{it}) = [-P+\sum_{n=0}^{\infty}e^{i(n+1)t}D_{P^*}P^{*n}D_P]\big |_{\cl D_P}
$$
we get
\begin{eqnarray}\label{eqn6}
\Delta_P^2M_{e^{it}+I}
&=& D_PPP_{\infty}^2D_P + D_PP_{\infty}^2D_P \nonumber\\
&& + e^{it}[D_PP_{\infty}^2D_P+D_PP_{\infty}^2P^*D_P]\nonumber\\
&& + \sum_{n=2}^{\infty}e^{int}[D_PP_{\infty}^2P^{*(n-1)}D_P+D_PP_{\infty}^2P^{*n}D_P]\nonumber\\
&& + \sum_{n=-\infty}^{-1}e^{int}[D_PP^{1-n}P_{\infty}^2D_P+D_PP^{1-n}P_{\infty}^2P^*D_P]
\end{eqnarray}
and
\begin{eqnarray}\label{eqn7}
\Delta_P^2M_{F+zF^*}
&=& D_P^2F+D_PD_{P^*}GP-D_PSD_P+D_PPP_{\infty}^2S^*D_P \nonumber\\
&& + e^{it}[F^*D_P^2+P^*G^*D_{P^*}D_P-D_PS^*D_P+D_PP_{\infty}^2S^*D_P]\nonumber\\
&& + \sum_{n=2}^{\infty}e^{int}[D_PP_{\infty}^2P^{*(n-1)}S^*D_P\nonumber\\
&& + \sum_{n=-\infty}^{-1}e^{int}[D_PP^{1-n}P_{\infty}^2S^*D_P],
\end{eqnarray}
where to simplify the expressions that appear in the expansion of
$\Delta_P^2M_{F+zF^*}$ we have used that $G$ being the fundamental
operator for $(S^*,P^*)$ satisfies the equations
$D_{P^*}GD_{P^*}=S^*-SP^*$ and
$D_{P^*}S^*=GD_{P^*}+G^*D_{P^*}P^*.$ We defer the proofs of these
two equations till the Appendix. Using these equations, we shall
now show that the coefficients of $e^{int}$ are the same in
Equations (\ref{eqn6}) and \ref{eqn7}). For this, let $L_n$ and
$R_n$ denote the coefficients of $e^{int}$ in the right hand side
of Equations (\ref{eqn6}) and (\ref{eqn7}), respectively.

We first look at
$$ L_0 = D_PPP_{\infty}^2D_P + D_PP_{\infty}^2D_P =
D_PPP_{\infty}^2S^*D_P,$$
since $PP_{\infty}^2+P_{\infty}^2-PP_{\infty}^2S^*=0.$

Now, consider

\begin{eqnarray*}
R_0 &=& D_P^2F+D_PD_{P^*}GP-D_PSD_P+D_PPP_{\infty}^2S^*D_P\\
R_0D_P& = &D_P[D_PFD_P+D_{P^*}GPD_P-SD_P^2+PP_{\infty}^2S^*D_P^2]\\
&=& D_P [S-S^*P+(S^*-SP^*)P-S(1-P^*P)]+ D_PPP_{\infty}^2S^*D_P^2\\
&=& 0+ D_PPP_{\infty}^2S^*D_P^2\\
&=&L_0D_P.
\end{eqnarray*}
Thus $L_0=R_0,$ since $L_0,\ R_0\in \cl{B}(\cl D_P).$

From Equation (\ref{eqn6}),
$$ L_1  =  D_PP_{\infty}^2D_P+D_PP_{\infty}^2P^*D_P  =
D_PP_{\infty}^2S^*D_P,$$
since $P_{\infty}^2+PP_{\infty}^2P^* = P_{\infty}^2S^*.$

Further, from Equation (\ref{eqn7}),
\begin{eqnarray*}
R_1 &=& F^*D_P^2+P^*G^*D_{P^*}D_P-D_PS^*D_P+D_PP_{\infty}^2S^*D_P\\
D_PR_1 &=& D_P[F^*D_P^2+P^*G^*D_{P^*}D_P-D_PS^*D_P+D_PP_{\infty}^2S^*D_P]\\
&=& [D_PF^*D_P+D_PP^*G^*D_{P^*}-D_P^2S^*]D_P+D_P^2P_{\infty}^2S^*D_P\\
&=& [S^*-P^*S+P^*(S^*-SP^*)^*-(1-P^*P)S^*]D_P+ D_P^2P_{\infty}^2S^*D_P\\
&=& D_P^2P_{\infty}^2S^*D_P\\
&=&D_PL_1.
\end{eqnarray*}
Therefore, $D_PR_1=D_PL_1$ which implies that $R_1=L_1,$ as
$R_1, \ L_1\in \cl{B}(\cl D_P).$

We shall now show the equality of $L_n$ and $R_n$ for $n\ge 2.$
\begin{eqnarray*}
L_n &=& D_PP_{\infty}^2P^{*(n-1)}D_P+D_PP_{\infty}^2P^{*n}D_P\\
&=& D_PP_{\infty}^2S^*P^{*(n-1)}D_P = R_n.
\end{eqnarray*}

Lastly, we shall show that $L_n=R_n$ for all $n\le -1.$ For $n\le -1,$
\begin{eqnarray*}
L_n &=& D_PP^{1-n}P_{\infty}^2D_P+D_PP^{1-n}P_{\infty}^2P^*D_P\\
&=&D_PP^{1-n}P_{\infty}^2S^*D_P = R_n.
\end{eqnarray*}

All these above computations show that $L_n = R_n$ for all $n.$
Therefore, $\Delta_P^2M_{e^{it}+I} = \Delta_P^2 M_{F+zF^*}$ which
implies that $M_{e^{it}+I}\Delta_P = \Delta_P M_{F+zF^*}.$ Hence,
Equation (\ref{eqn1}) holds true.

To show the validity of Equation (\ref{eqn2}), note that
$$
\left(
\begin{array}{cc}
M_{X^*+z X}& 0\\
0 & R
\end{array}
\right)^* \Pi(\cl H)\subseteq\Pi(\cl H).
$$
Therefore, by Lemma \ref{Bresult}, we have Equation (\ref{eqn2}).

Conversely, Let $P$ be a c.n.u. contraction on $\cl H,$ and $F, Y\in \cl{B}(\cl D_P)$ with $w(F)\le 1, w(Y)\le 1$
and $G\in G(\cl D_{P^*})$ with $w(G)\le 1,$ satisfy the
Equations (\ref{eqn1}) and (\ref{eqn2}).

Let
$$
S=
\Pi^* \left(
\begin{array}{cc}
M_{X^*+z X}& 0\\
0 & M_{e^{it} + I}
\end{array}
\right)\Pi,
$$
where $X=V_1^*GV_1.$

From Equation (\ref{eqn2}) we can easily deduce that $\Pi(\cl H)$ is invariant
under
$$
\left(
\begin{array}{cc}
M_{X^*+z X}& 0\\
0 & M_{e^{it} + I}
\end{array}
\right)^*,
$$
Also,
$$
P=
\Pi^*\left(
\begin{array}{cc}
M_z& 0\\
0 & M_{e^{it}}
\end{array}
\right)\Pi
\
\
\text{and}
\
\
\left(
\begin{array}{cc}
M_z& 0\\
0 & M_{e^{it}}
\end{array}
\right)^*\Pi(\cl H)\subseteq \Pi(\cl H).
$$
Therefore,
$$
S^*P^* = P^*S^*.
$$
Thus, $(S,P)$ is a commuting pair of bounded operators on $\cl H$ with $\lVert S \rVert \leq 2$.

Now to show that $G$ is the fundamental operator for $(S^*,P^*),$ consider
\begin{eqnarray*}
S^*-SP^* &=& \Pi^*
\left(
\begin{array}{cc}
M_{X^*+z X} & 0\\
0 & M_{e^{it}+I}
\end{array}
\right)^*\Pi \\
&-&
\Pi^*
\left(
\begin{array}{cc}
M_{X^*+z X} & 0\\
0 & M_{e^{it}+I}
\end{array}
\right)
\Pi \ \Pi^*
\left(
\begin{array}{cc}
M_z & 0\\
0 & M_{e^{it}}
\end{array}
\right)^*\Pi
\\
&=& \Pi^*\
\left(
\begin{array}{cc}
P_{\bb C}\otimes X & 0\\
0 & 0
\end{array}
\right)\Pi\\
&=& \Pi_0^*\
\left(
\begin{array}{cc}
P_{\bb C}\otimes G & 0\\
0 & 0
\end{array}
\right)\Pi_0\\
&=& D_{P^*} G D_{P^*}
\end{eqnarray*}
Thus, $S^*-SP^* = D_{P^*} G D_{P^*}.$ Therefore, $G$ is the fundamental operator for $(S^*,P^*).$

Applying the first part of this result to the c.n.u $\Gamma$-contraction
$(S,P),$ we obtain
\begin{eqnarray}\label{eqn8}
\left(
\begin{array}{cc}
M_{G^*+zG} & 0\\
0 & M_{e^{it}+I}
\end{array}
\right)
\left(
\begin{array}{c}
M_{\Theta_P} \\
\Delta_P
\end{array}
\right)
&=&
\left(
\begin{array}{c}
M_{\Theta_P}\\
\Delta_P
\end{array}
\right)
M_{C+zC^*},
\end{eqnarray}
where $C\in \cl{B}(\cl D_P)$ is the fundamental operator for $(S,P).$ Then from the given equation, that is, Equation (2) and Equation (\ref{eqn8}) and the fact that
$$
\left(
\begin{array}{c}
M_{\Theta_P}\\
\Delta_P
\end{array}
\right)
$$
is an isometry we get $M_{F+zF^*}=M_{C+zC^*}.$ Thus $F=C.$ This completes the proof.
\qed

\begin{remark}
Every pure contraction is a c.n.u. contraction. So, for a pure contraction $P\in \cl B(\cl H),$ we have two
results , Theorem
\ref{genthm1} and the converse of Theorem \ref{genthm2}. Theorem \ref{genthm2} demands two conditions, namely Equations
(\ref{eqn1}) and (\ref{eqn2}), for the existence of
$S\in \cl B(\cl H)$ so that the operators $F$ and $G$ are the
fundamental operators for $(S,P)$ and
$(S^*,P^*)$, respectively, whereas in Theorem
\ref{genthm1} the same conclusion holds just by assuming Equation (\ref{eqn1}). Does this make Theorem \ref{genthm2} a weaker result? The answer is no as we shall see from the following discussion that if $P$ is a pure contraction
Equation (\ref{eqn1}) holds if and only if equation (\ref{eqn2}) holds.

Let $P\in\cl B(\cl H)$ be a pure contraction. Then $\P_{\infty}$ and $\Delta_P$ are both zero. Therefore, for the
pure contraction $P,$ Equations (\ref{eqn1}) and (\ref{eqn2}) become
\begin{equation}\label{endremark1}
M_{G^*+zG}M_{\Theta_P} = M_{\Theta_P}M_{F+zF^*}
\end{equation}
and
\begin{equation}\label{endremark2}
M_{G^*+zG}M_{V_1}M_{\Theta_P} = M_{V_1}M_{\Theta_P}M_{Y+zY^*},
\end{equation}
respectively. Further, now since $P$ is pure,
$\Phi = I \otimes V_1, \ \Pi_0 \Pi_0^* + M_{\Theta_P}M_{\Theta_P}^* = I$
and $\Pi_0 = W.$ This implies that $M_{\Theta_P}$ and $(I \otimes V_1)M_{\Theta_P}$  are both isometries in $\cl{B}(H^2_{\cl{D}_P}(\mathbb{D}),H^2_{\cl{D}_{P^*}}(\mathbb{D}))$ and they satisfy the following equation
$$
M_{\Theta_P}M_{\Theta_P}^* = (I \otimes V_1)M_{\Theta_P}M_{\Theta_P}^* (I \otimes V_1^*).
$$
Consequently, $RanM_{\Theta_P} = RanM_{V_1}M_{\Theta_P}.$ Hence, by using Lemma \ref{Bresult}, we can easily conclude that if Equation (\ref{endremark2}) holds, then
Equation (\ref{endremark1}) will also hold. Lastly, if Equation
(\ref{endremark1}) holds, then by using arguments similar to the ones used
in the proof of Lemma \ref{Bresult}, Equation (\ref{endremark2})
will also hold.
\end{remark}

\section{tetrablock contractions}

 In this section, we prove a result for pure tetrablock contractions similar to the result stated in Theorem \ref{genthm} and Theorem \ref{genthm1} for pure $\Gamma$-contractions.

Before we state and prove the main results of this section, we need to recall a result from \cite{sir's tetrablock paper} which will come very handy in proving the main results.
\begin{lemma}\label{tetra}
The fundamental operators $F_1$ and $F_2$ of a tetrablock contraction $(A,B,P)$
are the unique bounded linear operators on $\mathcal{D}_P$ that satisfy the pair
of operator equations
\begin{eqnarray*}
D_PA = X_1D_P + X_2^*D_PP \text{ and } D_P B = X_2D_P + X_1^*D_PP.
\end{eqnarray*}
\end{lemma}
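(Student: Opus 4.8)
The plan is to prove both assertions — that the fundamental operators $F_1,F_2$ do satisfy the displayed system, and that they are the only pair on $\mathcal{D}_P$ doing so — working throughout on the dense subspace $\operatorname{Ran}D_P$ of $\mathcal{D}_P$, since every operator appearing in the statement maps into $\mathcal{D}_P$ and is therefore determined by its action on the total set $\{D_P h : h \in \mathcal{H}\}$.

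For the existence half I would verify the first equation $D_P A = F_1 D_P + F_2^* D_P P$ weakly, i.e.\ by checking that $\langle D_P A h, D_P k\rangle = \langle (F_1 D_P + F_2^* D_P P)h, D_P k\rangle$ for all $h,k \in \mathcal{H}$; as both sides lie in $\mathcal{D}_P$ and the vectors $D_P k$ are total there, this suffices. Expanding the left side with $D_P^2 = I - P^*P$ gives $\langle Ah,k\rangle - \langle PAh, Pk\rangle$. On the right side I would move $D_P$ across and substitute the fundamental equations (\ref{Maa12}) in the forms $D_P F_1 D_P = A - B^*P$ and $D_P F_2^* D_P = (D_P F_2 D_P)^* = B^* - P^*A$; the two middle terms cancel, leaving $\langle Ah,k\rangle - \langle APh, Pk\rangle$. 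The two expressions agree precisely because $(A,B,P)$ commutes, so $PA = AP$. The second equation $D_P B = F_2 D_P + F_1^* D_P P$ follows from the identical computation with the roles of $A,B$ (and $F_1,F_2$) interchanged, using $D_P F_1^* D_P = A^* - P^*B$ and $PB = BP$.

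For uniqueness I would pass to the homogeneous system: if $(X_1,X_2)$ is another solution, then $Y_1 = X_1 - F_1$ and $Y_2 = X_2 - F_2$ satisfy $Y_1 D_P = -Y_2^* D_P P$ and $Y_2 D_P = -Y_1^* D_P P$, and the goal is $Y_1 = Y_2 = 0$ on $\mathcal{D}_P$. The crucial observation is a self-improving identity: substituting the first homogeneous equation and then the second into $\langle Y_1 D_P h, D_P k\rangle$ yields
\begin{equation*}
\langle Y_1 D_P h, D_P k\rangle = \langle Y_1 D_P P h, D_P P k\rangle \quad\text{for all } h,k\in\mathcal{H}.
\end{equation*}
Iterating gives $\langle Y_1 D_P h, D_P k\rangle = \langle Y_1 D_P P^n h, D_P P^n k\rangle$ for every $n$. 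Since $\|D_P P^n h\|^2 = \|P^n h\|^2 - \|P^{n+1}h\|^2 \to 0$ for any contraction $P$, the right-hand side tends to $0$, forcing $\langle Y_1 D_P h, D_P k\rangle = 0$ for all $h,k$ and hence $Y_1 = 0$; the same argument gives $Y_2 = 0$.

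The routine part is the existence computation, which is bookkeeping with the fundamental equations, their adjoints, and commutativity. The main obstacle is the uniqueness step: the two equations couple $X_1$ with $X_2^*$ and $X_2$ with $X_1^*$, so neither operator can be read off directly, and a naive attempt to reduce matters to the known uniqueness of the fundamental equations (Theorem 1.3 of \cite{sir's tetrablock paper}) only produces an identity of the form $W = P^*WP$, which does not yield $W=0$ for a non-pure $P$. The telescoping identity above is what breaks this deadlock, and I expect discovering it (rather than executing it) to be the real content; note that it requires only $\|D_P P^n h\| \to 0$, so the argument covers all tetrablock contractions, not merely pure ones.
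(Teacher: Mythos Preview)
Your argument is correct in both halves. The existence computation is exactly the bookkeeping you describe, and the uniqueness step via the self-improving identity $\langle Y_1 D_P h, D_P k\rangle = \langle Y_1 D_P P^n h, D_P P^n k\rangle$ together with the telescoping $\|D_P P^n h\|^2 = \|P^n h\|^2 - \|P^{n+1}h\|^2 \to 0$ is a clean and fully general proof, valid for any contraction $P$.

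As for comparison: the paper does not actually prove this lemma. It is stated as a result \emph{recalled} from \cite{sir's tetrablock paper}, with no argument given in the present paper. So your proposal supplies strictly more than the paper does here --- a complete, self-contained proof rather than a citation. Your uniqueness argument in particular is a nice elementary device that avoids appealing to the (cited) uniqueness of the fundamental equations themselves.
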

The next two lemmas give analogous results for a tetrablock contraction to the Lemma \ref{adlem2}. These two lemmas can be found in \cite{sau}. We just state the results here without giving the proofs.
\begin{lemma}\label{tetralem4}
Let (A,B,P) be a tetrablock contraction on a Hilbert space $\mathcal{H}$ and $F_1, F_2$ and $G_1,G_2$ be fundamental operators of $(A,B,P)$ and $(A^*,B^*,P^*)$ respectively. Then
$$
PF_i=G_i^*P|_{\mathcal{D}_P}, \text{ for $i$=$1$ and $2$}.
$$
\end{lemma}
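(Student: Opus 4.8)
The plan is to derive $PF_i = G_i^* P|_{\mathcal{D}_P}$ directly from the two families of fundamental equations (\ref{Maa12}) and (\ref{Maa13}), combined with the standard intertwining $PD_P = D_{P^*}P$ recalled in the introduction and the commutativity of the triple $(A,B,P)$. I would treat $i=1$ in full and note that $i=2$ is identical after interchanging the roles of the two equations. The strategy is to produce, from each side, an expression of the shape $D_{P^*}(\,\cdot\,)D_P$ whose outer factors I can eventually strip off.

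First I would take the fundamental equation $A - B^*P = D_P F_1 D_P$ and multiply on the left by $P$; using $PD_P = D_{P^*}P$ on the right-hand side this gives
\[
PA - PB^*P = D_{P^*}\,(PF_1)\,D_P .
\]
Next I would pass to the adjoint of the first equation in (\ref{Maa13}), namely $A - PB^* = D_{P^*}G_1^* D_{P^*}$, and multiply it on the right by $P$; using $D_{P^*}P = PD_P$ this yields
\[
AP - PB^*P = D_{P^*}\,(G_1^* P)\,D_P .
\]
Since $P$ commutes with $A$, the two left-hand sides coincide, and subtracting produces $D_{P^*}(PF_1 - G_1^* P)D_P = 0$. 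For $i=2$ the same two manipulations applied to $B - A^*P = D_P F_2 D_P$ and to the adjoint $B - PA^* = D_{P^*}G_2^* D_{P^*}$ of the second equation in (\ref{Maa13}), this time invoking $PB = BP$, give $D_{P^*}(PF_2 - G_2^* P)D_P = 0$.

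It then remains to remove the defect operators, and this cancellation is the one step that I regard as the main obstacle, since one cannot literally cancel $D_{P^*}$ and $D_P$. I would first observe that the intertwining $PD_P = D_{P^*}P$ forces $P\mathcal{D}_P \subseteq \mathcal{D}_{P^*}$, so that $PF_i$ and $G_i^* P$ are genuine operators from $\mathcal{D}_P$ into $\mathcal{D}_{P^*}$ and the vector $(PF_i - G_i^* P)D_P h$ lies in $\mathcal{D}_{P^*}$ for every $h$. Because $D_{P^*}$ is injective on $\mathcal{D}_{P^*} = (\ker D_{P^*})^{\perp}$, the identity $D_{P^*}(PF_i - G_i^* P)D_P = 0$ forces $(PF_i - G_i^* P)D_P = 0$ on all of $\mathcal{H}$; evaluating at an arbitrary $h$ and using that $\operatorname{Ran} D_P$ is dense in $\mathcal{D}_P$ together with boundedness of $PF_i - G_i^* P$ then yields $PF_i\xi = G_i^* P\xi$ for every $\xi \in \mathcal{D}_P$, which is exactly the asserted relation. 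The algebra itself collapses cleanly once the left-hand sides are matched by commutativity; the only delicate point is making this final density-and-injectivity argument precise rather than formally cancelling the defects.
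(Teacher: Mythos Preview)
Your argument is correct. The algebraic manipulation is sound: left-multiplying $A-B^*P=D_PF_1D_P$ by $P$ and right-multiplying the adjoint of $A^*-BP^*=D_{P^*}G_1D_{P^*}$ by $P$, together with $PD_P=D_{P^*}P$ and $AP=PA$, indeed gives $D_{P^*}(PF_1-G_1^*P)D_P=0$; your final cancellation via injectivity of $D_{P^*}$ on $\mathcal{D}_{P^*}$ and density of $\operatorname{Ran}D_P$ in $\mathcal{D}_P$ is the right way to strip the defect operators.

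As for comparison with the paper: there is nothing to compare against here, since the paper does not prove this lemma at all---it simply quotes it from \cite{sau} and states the result without proof. Your proof is in fact the standard one (and is essentially how the analogous $\Gamma$-contraction identity $PF=G^*P|_{\mathcal{D}_P}$ of Lemma~\ref{adlem2}(a) is established in the reference \cite{sir and me} that the paper cites for that case). So you have supplied a complete, self-contained argument where the paper only gives a citation.
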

\begin{lemma}\label{tetralem3}
Let $(A,B,P)$ be a tetrablock contraction on a Hilbert space $\mathcal{H}$ and $F_1,F_2$ and $G_1,G_2$ be fundamental operators of $(A,B,P)$ and $(A^*,B^*,P^*)$ respectively. Then
\begin{eqnarray*}
&&(F_1^*D_PD_{P^*}-F_2P^*)|_{\mathcal{D}_{P^*}}=D_PD_{P^*}G_1-P^*G_2^* \text{ and }
\\
&&(F_2^*D_PD_{P^*}-F_1P^*)|_{\mathcal{D}_{P^*}}=D_PD_{P^*}G_2-P^*G_1^*.
\end{eqnarray*}
\end{lemma}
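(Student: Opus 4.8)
The plan is to verify each of the two operator identities in its weak (sesquilinear) form, pairing both sides against vectors of the special shape $D_{P^*}h$ inserted on the right and $D_Pk$ on the left. Since $\overline{\mathrm{Ran}}\,D_{P^*}=\mathcal{D}_{P^*}$ and $\overline{\mathrm{Ran}}\,D_P=\mathcal{D}_P$, and all the operators involved are bounded and map $\mathcal{D}_{P^*}$ into $\mathcal{D}_P$, it suffices to show $\langle(F_1^*D_PD_{P^*}-F_2P^*)D_{P^*}h,\,D_Pk\rangle=\langle(D_PD_{P^*}G_1-P^*G_2^*)D_{P^*}h,\,D_Pk\rangle$ for all $h,k\in\mathcal{H}$; this establishes the first identity on $\mathcal{D}_{P^*}$, and the second is handled identically.

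The mechanism that makes this work is that pairing against $\mathrm{Ran}\,D_P$ converts every ``half-defect'' expression (an $F_i$ or $G_j$ flanked by only one defect operator) into a full sandwich $D_P(\cdot)D_P$ or $D_{P^*}(\cdot)D_{P^*}$, which the fundamental equations (\ref{Maa12}) and (\ref{Maa13}) and their adjoints collapse directly into polynomials in $A,B,P$. Concretely, I would move the outer $F_1^*$ and $D_P$ across the inner product and invoke $D_PF_1D_P=A-B^*P$, $D_PF_2^*D_P=B^*-P^*A$, $D_{P^*}G_1D_{P^*}=A^*-BP^*$ and $D_{P^*}G_2^*D_{P^*}=B-PA^*$; the intertwining relations $PD_P=D_{P^*}P$ and $P^*D_{P^*}=D_PP^*$ (see \cite{Nagy-Foias}) together with $D_P^2=I-P^*P$ and $D_{P^*}^2=I-PP^*$ are used to route the stray factors $P,P^*$ onto the correct side. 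After these substitutions the left pairing becomes $\langle[(A^*-P^*B)(I-PP^*)-(B-A^*P)P^*]h,k\rangle$ and the right pairing becomes $\langle[(I-P^*P)(A^*-BP^*)-P^*(B-PA^*)]h,k\rangle$.

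The final step is purely algebraic. Expanding both bracketed operators, all monomials cancel in pairs except that the left one retains $P^*BPP^*$ where the right one retains $P^*PBP^*$. Hence the entire identity collapses to the single requirement $P^*(BP-PB)P^*=0$, which holds because $(A,B,P)$ is a commuting triple. The second identity follows verbatim after the substitution $A\leftrightarrow B$, $F_1\leftrightarrow F_2$, $G_1\leftrightarrow G_2$ — a symmetry already present in (\ref{Maa12}) and (\ref{Maa13}) — and reduces in exactly the same way to $P^*(AP-PA)P^*=0$.

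I expect the only genuine obstacle to be organizational rather than conceptual: keeping careful track of which defect operator and which adjoint attaches to each fundamental operator while shuttling $P$ and $P^*$ through $D_P$ and $D_{P^*}$, so that the sandwich forms required by (\ref{Maa12})--(\ref{Maa13}) actually materialize. The virtue of this weak formulation is that it sidesteps the difficulty of isolating a lone half-defect term such as $F_1^*D_P$ — which neither the fundamental equations nor Lemma \ref{tetra} can simplify on their own — and avoids any delicate splitting over $\ker P^*$; the commutativity of the triple does all the remaining work.
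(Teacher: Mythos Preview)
Your proof is correct. The weak-form reduction is sound: both sides of each identity are bounded operators $\mathcal{D}_{P^*}\to\mathcal{D}_P$ (the intertwining $P^*D_{P^*}=D_PP^*$ guarantees that $P^*$ carries $\mathcal{D}_{P^*}$ into $\mathcal{D}_P$), so testing against the dense ranges $D_{P^*}h$ and $D_Pk$ suffices. Your subsequent algebra checks out line by line; after substituting the four fundamental equations and expanding, the two bracketed operators differ exactly by $P^*(BP-PB)P^*$, which vanishes by commutativity.

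The paper itself does not prove this lemma: it simply quotes the result from \cite{sau} and moves on. So there is no in-paper argument to compare against. Your approach is a clean, self-contained verification that relies only on the defining equations (\ref{Maa12})--(\ref{Maa13}) and the standard defect identities, which is exactly what one would expect the cited proof to do as well.
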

The fundamental operators of a tetrablock contraction always abide by two relations (like in the case of $\Gamma$-contractions, Theorem \ref{genthm}). The next theorem, which was proved in \cite{sau}(Corollary 12), gives the relations between them.
\begin{lemma}\label{haha}
Let $F_1$ and $F_2$ be fundamental operators of a tetrablock contraction $(A,B,P)$ and $G_1$ and $G_2$ be fundamental operators of the tetrablock contraction $(A^*,B^*,P^*)$. Then
\begin{eqnarray}\label{adeq2}
&&(F_1^*+F_2z)\Theta_{P^*}(z)=\Theta_{P^*}(z)(G_1+G_2^*z) \text{ and}
\\\label{cccv}
&&(F_2^*+F_1z)\Theta_{P^*}(z)=\Theta_{P^*}(z)(G_2+G_1^*z) \text{ holds} \text{ for all }z \in \mathbb{D}.
\end{eqnarray}
\end{lemma}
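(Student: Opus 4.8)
The plan is to establish (\ref{adeq2}), and then obtain (\ref{cccv}) by the identical computation with the indices $1$ and $2$ interchanged, by imitating the proof of Theorem \ref{genthm}: expand the characteristic function of $P^*$, multiply out both sides of the asserted identity, and match the coefficients of $z^n$ one power at a time. Writing $\Theta_{P^*}(z)=[-P^*+\sum_{n=0}^{\infty}z^{n+1}D_PP^nD_{P^*}]|_{\mathcal{D}_{P^*}}$ and using the intertwining $D_PP^*=P^*D_{P^*}$ (the adjoint of $PD_P=D_{P^*}P$, which also guarantees that each $\Theta_{P^*}(z)$ maps $\mathcal{D}_{P^*}$ into $\mathcal{D}_P$), I would expand
\[
(F_1^*+F_2z)\Theta_{P^*}(z)=-F_1^*P^*+z(F_1^*D_PD_{P^*}-F_2P^*)+\sum_{n=2}^{\infty}z^n(F_1^*D_PP+F_2D_P)P^{n-2}D_{P^*}
\]
and, on the other side,
\[
\Theta_{P^*}(z)(G_1+G_2^*z)=-P^*G_1+z(D_PD_{P^*}G_1-P^*G_2^*)+\sum_{n=2}^{\infty}z^nD_PP^{n-2}(PD_{P^*}G_1+D_{P^*}G_2^*).
\]
It then suffices to match the constant term, the coefficient of $z$, and the tail $n\ge 2$.

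For the constant terms I would use Lemma \ref{tetralem4}, whose adjoint gives $F_1^*P^*=P^*G_1$ on $\mathcal{D}_{P^*}$, so that $-F_1^*P^*=-P^*G_1$. The coefficients of $z$ agree verbatim with the first identity of Lemma \ref{tetralem3}, namely $F_1^*D_PD_{P^*}-F_2P^*=D_PD_{P^*}G_1-P^*G_2^*$. The substantive step is the tail. Here I would invoke Lemma \ref{tetra} for $(A,B,P)$, i.e. $D_PB=F_2D_P+F_1^*D_PP$, to collapse the left inner factor to $F_1^*D_PP+F_2D_P=D_PB$; and Lemma \ref{tetra} for the adjoint tetrablock contraction $(A^*,B^*,P^*)$, whose second fundamental relation $D_{P^*}B^*=G_2D_{P^*}+G_1^*D_{P^*}P^*$ adjoints to $BD_{P^*}=D_{P^*}G_2^*+PD_{P^*}G_1$, to collapse the right inner factor to $PD_{P^*}G_1+D_{P^*}G_2^*=BD_{P^*}$. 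The two tails then read $\sum_{n\ge 2}z^nD_PBP^{n-2}D_{P^*}$ and $\sum_{n\ge 2}z^nD_PP^{n-2}BD_{P^*}$, which coincide because $(A,B,P)$ is a commuting triple, so $B$ commutes with $P^{n-2}$.

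The main difficulty is bookkeeping rather than any new idea: because it is $\Theta_{P^*}$ and not $\Theta_P$ that appears, the entire calculation is the dual mirror of Theorem \ref{genthm}, with the roles of $P$ and $P^*$, of $\mathcal{D}_P$ and $\mathcal{D}_{P^*}$, and of the $F_i$ and the $G_i$ interchanged; I must therefore track domains and restrictions carefully and be sure to use the correct one of the two fundamental equations (the $A$-equation versus the $B$-equation) and the correct index of Lemmas \ref{tetralem4} and \ref{tetralem3} at each stage. Once the three coefficient identities are verified, summation over $n$ yields (\ref{adeq2}) for all $z\in\mathbb{D}$, and running the same argument with $1\leftrightarrow 2$ (using the second equations of Lemmas \ref{tetra} and \ref{tetralem3} and the $i=2$ case of Lemma \ref{tetralem4}) gives (\ref{cccv}), completing the proof.
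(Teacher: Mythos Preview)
Your proposal is correct and follows essentially the same approach as the paper: expand $\Theta_{P^*}(z)$ as a power series, multiply out each side, and match coefficients using Lemmas \ref{tetra}, \ref{tetralem4}, \ref{tetralem3} together with the commutativity of $B$ and $P$. The organization and the key identifications (in particular $F_1^*D_PP+F_2D_P=D_PB$ and $PD_{P^*}G_1+D_{P^*}G_2^*=BD_{P^*}$) are exactly those the paper uses.
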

\begin{proof}
\begin{eqnarray*}
&&(F_1^*+F_2z)\Theta_{P^*}(z)
\\
&=& (F_1^*+F_2z) (-P^* + \sum_{n=0}^{\infty}z^{n+1}D_PP^nD_{P^*})
\\
&=& (-F_1^*P^*+\sum_{n=1}^{\infty}z^{n}F_1^*D_PP^{n-1}D_{P^*}) + (-zF_2P^* + \sum_{n=2}^{\infty}z^{n}F_2D_PP^{n-2}D_{P^*} )
\\
&=&
-F_1^*P^*+z(-F_2P^*+F_1^*D_PD_{P^*})+ \sum_{n=2}^{\infty} z^n(F_1^*D_PP^{n-1}D_{P^*}+F_2D_PP^{n-2}D_{P^*})
\\
&=& -F_1^*P^*+z(-F_2P^*+F_1^*D_PD_{P^*})+ \sum_{n=2}^{\infty} z^n (F_1^*D_PP+F_2D_P)P^{n-2}D_{P^*}
\\
&=&
-P^*G_1+z(D_PD_{P^*}G_1-P^*G_2^*) + \sum_{n=2}^{\infty} z^nD_PBP^{n-2}D_{P^*} \; [\text{ using Lemma \ref{tetra}, \ref{tetralem4} and \ref{tetralem3}.}]
\end{eqnarray*}
On the other hand
\begin{eqnarray*}
&&\Theta_{P^*}(z)(G_1+G_2^*z)
\\
&=&
(-P^* + \sum_{n=0}^{\infty}z^{n+1}D_PP^nD_{P^*})(G_1+G_2^*z)
\\
&=&
(-P^*G_1 + \sum_{n=1}^{\infty}z^{n}D_PP^{n-1}D_{P^*}G_1) + (-zP^*G_2^*+\sum_{n=2}^{\infty}z^{n}D_PP^{n-2}D_{P^*}G_2^*)
\\
&=&
-P^*G_1 + z(D_PD_{P^*}G_1-P^*G_2^*) + \sum_{n=2}^{\infty}z^{n}(D_PP^{n-1}D_{P^*}G_1+D_PP^{n-2}D_{P^*}G_2^*)
\end{eqnarray*}
\begin{eqnarray*}
&=&
-P^*G_1 + z(D_PD_{P^*}G_1-P^*G_2^*) + \sum_{n=2}^{\infty}z^{n}D_PP^{n-2}(PD_{P^*}G_1+D_PG_2^*)
\\
&=&
-P^*G_1 + z(D_PD_{P^*}G_1-P^*G_2^*) + \sum_{n=2}^{\infty}z^{n}D_PP^{n-2}BD_{P^*}
\\
&=& -P^*G_1+z(D_PD_{P^*}G_1-P^*G_2^*) + \sum_{n=2}^{\infty} z^nD_PBP^{n-2}D_{P^*}.
\end{eqnarray*}
Hence $(F_1^*+F_2z)\Theta_{P^*}(z)=\Theta_{P^*}(z)(G_1+G_2^*z)$ for all $z \in \mathbb{D}$.
Similarly one can prove that $(F_2^*+F_1z)\Theta_{P^*}(z)=\Theta_{P^*}(z)(G_2+G_1^*z) \text{ holds for all }z \in \mathbb{D}$.
\end{proof}

We end with the proof of Theorem \ref{tetrathm}.

{\em{\underline{Proof of Theorem \ref{tetrathm}.}}} The first part is obtained by applying Lemma \ref{haha} to the tetrablock contraction $(A^*,B^*,P^*)$.

For the converse, let $W$ be the isometry defined above. Since $P$ is pure contraction, we have $WP^*=M_z^*W$ as seen in Equation (\ref{contraction}). Equations (\ref{cond}) implies that $(M_{G_1^*+G_2z},M_{G_2^*+G_1z},M_z)$ is a commuting triple of bounded operators on $H^2_{\cl D_{P^*}}(\bb D)$. Using Theorem 5.7 (part (3)) of \cite{sir's tetrablock paper} one can easily check that $(M_{G_1^*+G_2z},M_{G_2^*+G_1z},M_z)$ is actually a tetrablock isometry. Define $A=W^*M_{G_1^*+G_2z}W$ and $B=W^*M_{G_2^*+G_1z}W$. Equations (\ref{adeq3}) and (\ref{help}) tells that $RanM_{\Theta_P}$ is invariant under $M_{G_1^*+G_2z}$ and $M_{G_2^*+G_1z}$. In other words $RanW=(RanM_{\Theta_P})^\perp$ is invariant under $M_{G_1^*+G_2z}^*$ and $M_{G_2^*+G_1z}^*$.
\\
Commutativity of $A$ and $B$ with $P$ can be checked easily. To show that $A$ and $B$ commute, we proceed as follows.
\begin{eqnarray*}
A^*B^*&=&W^*M_{G_1^*+G_2z}^*WW^*M_{G_2^*+G_1z}^*W
\\
&=&
W^*M_{G_1^*+G_2z}^*M_{G_2^*+G_1z}^*W \;\;\;[\text{ since $RanW$ is invariant under $M_{G_2^*+G_1z}^*$.}]
\\
&=&
W^*M_{G_2^*+G_1z}^*M_{G_1^*+G_2z}^*W
\\
&=&
W^*M_{G_2^*+G_1z}^*WW^*M_{G_1^*+G_2z}^*W \;\;\;[\text{ since $RanW$ is invariant under $M_{G_1^*+G_2z}^*$.}]
\\
&=&B^*A^*.
\end{eqnarray*}
Therefore $(A,B,P)$ is a commuting triple of bounded operators. Now we shall show that $(A,B,P)$ is a tetrablock contraction. Note that for every polynomial $f$ in three variables we have $f(A^*,B^*,P^*)=W^*f(T_1^*,T_2^*,T_3^*)W$, where $(T_1,T_2,T_3)=(M_{G_1^*+G_2z},M_{G_2^*+G_1z},M_z)$. Let $f$ be any polynomial in three variables. Then we have
$$
\lVert f(A^*,B^*,P^*) \rVert = \lVert W^*f(T_1^*,T_2^*,T_3^*)W \rVert \leq \lVert f(T_1^*,T_2^*,T_3^*) \rVert \leq \lVert f \rVert_{\bar{E}, \infty}.
$$
Where the last inequality follows from the fact that $(T_1,T_2,T_3)$ is a tetrablock contraction.
\begin{eqnarray*}
A^*-BP^*&=&W^*M_{G_1^*+G_2z}^*W-W^*M_{G_2^*+G_1z}WW^*M_z^*W
\\
&=&
W^*M_{G_1^*+G_2z}^*W - W^*M_{G_2^*+G_1z}M_z^*W    \;\;\;[\text{since $RanW$ is invariant under $M_z^*$}]
\\
&=&
W^*\left( (I \otimes G_1) + (M_z \otimes G_2^*) -(M_z^* \otimes G_2^*) - (M_z M_z^* \otimes G_1) \right)W
\\
&=&
W^*(P_{\mathbb{C} } \otimes G_1) W =D_{P^*}G_1D_{P^*}.
\end{eqnarray*}
Similarly one can show that $B^*-AP^*=D_{P^*}G_2D_{P^*}$.
This shows that $G_1,G_2$ are the fundamental operators of $(A^*,B^*,P^*)$.
Let $X_1,X_2$ be the fundamental operators of $(A,B,P)$. Then we have, by first part of Theorem \ref{tetrathm},
\begin{eqnarray*}
&&(G_1^*+G_2z)\Theta_{P}(z)=\Theta_{P}(z)(X_1+X_2^*z) \text{ and}
\\
&&(G_2^*+G_1z)\Theta_{P}(z)=\Theta_{P}(z)(X_2+X_1^*z) \text{ holds} \text{ for all }z \in \mathbb{D}.
\end{eqnarray*}
By this and the fact that $G_1$ and $G_2$ satisfy Equations (\ref{adeq3}) and (\ref{help}), for some operators $F_1,F_2 \in \mathcal{B}(\mathcal{D}_{P})$ with numerical radii no greater than one, we have $F_1+F_2^*z=X_1+X_2^*z$ and $F_2+F_1^*z=X_2+X_1^*z$, for all $z \in \mathbb{D}$. Which shows that $X_1=F_1$ and $X_2=F_2$. Hence $F_1,F_2$ are the fundamental operators of $(A,B,P)$. This completes the proof of the Theorem.
\qed

\section{appendix}

\subsection{Proof of Equation (\ref{eqn6})}

\begin{eqnarray*}
\Delta_P(t)^2(e^{it}+I) &=& [I-\Theta_P(e^{it})^*\Theta_P(e^{it})][e^{it}+I]\\
&=&[I-(-P^*+\sum_{n=0}^{\infty}e^{-i(n+1)t}D_PP^nD_{P^*})
(-P+\sum_{n=0}^{\infty}e^{i(n+1)t}D_{P^*}P^{*n}D_P)]\\
&&[e^{it}+I]\\
&=& [e^{it}+I]-[P^*+\sum_{n=-\infty}^{-1}e^{int}D_PP^{-n-1}D_{P^*}]\\
&&[-P+e^{it}(D_{P^*}D_P-P)+\sum_{n=2}^{\infty}e^{int}(D_{P^*}P^{*(n-2)}(I+P^*)D_P)]\\
&=& [e^{it}+I]-P^*P-e^{it}(P^*P-P^*D_{P^*}D_P)\\
&+&\sum_{n=2}^{\infty}e^{int}P^*D_{P^*}P^{*(n-2)}(I+P^*)D_P+
\sum_{n=-\infty}^{-1}e^{int}D_PP^{-n-1}D_{P^*}P\\
&-&\sum_{n=-\infty}^{0}e^{int}D_PP^{-n}D_{P^*}(D_{P^*}D_P-P)\\
&-& \sum_{n=-\infty}^{0}e^{int}[\sum_{k=-\infty}^{n-2}D_PP^{-k-1}D_{P^*}^2P^{*(n-k-2)}(I+P^*)D_P]\\
&-&\sum_{n=1}^{\infty}e^{int}[\sum_{k=-\infty}^{-1}D_PP^{-k-1}D_{P^*}^2P^{*(n-k-2)}(I+P^*)D_P]\\
\end{eqnarray*}
We shall now simplify the coefficients of $e^{int}, \ n\in\bb Z.$
Let $C_n$ denote the coefficient of $e^{int}.$ In the following
simplifications we shall be repeatedly using $D_{P^*}^2=I-PP^*, \
D_PP^*=P^*D_{P^*},$ $P_{\infty}^2h = \lim_nP^nP^{*n}h$ for all $h$
and $PP_{\infty}^2P^*=P_{\infty}^2.$
\begin{eqnarray*}
C_0 &=& I-P^*P-D_PD_{P^*}(D_{P^*}D_P-P)-\sum_{k=-\infty}^{-2}D_PP^{-k-1}D_{P^*}^2
P^{*(-k-2)}(I+P^*)D_P\\
&=&D_PPD_P+D_PPP^*D_P-\sum_{k=2}^{\infty}D_PP(P^{k-2}P^{*(k-2)}-P^{k-1}P^{*(k-1)})(I+P^*)D_P\\
&=& D_PPP_{\infty}^2D_P+D_PP_{\infty}^2D_P.
\end{eqnarray*}
\begin{eqnarray*}
C_1 &=& I-P^*P+P^*D_{P^*}D_P-\sum_{k=-\infty}^{-1}D_PP^{-k-1}D_{P^*}^2P^{*(-k-1)}(I+P^*)D_P\\
&=&D_P^2+D_PP^*D_P-\sum_{k=1}^{\infty}D_P(P^{k-1}P^{*(k-1)}-P^{k}P^{*k})(I+P^*)D_P\\
&=&D_PP_{\infty}^2D_P+D_PP_{\infty}^2P^*D_P.
\end{eqnarray*}
Next we look at $C_n, \ n\ge 2.$ For $n\ge 2,$
\begin{eqnarray*}
C_n &=& P^*D_{P^*}P^{*(n-2)}(I+P^*)D_P-\sum_{k=-\infty}^{-1}D_PP^{-k-1}D_{P^*}^2P^{*(n-k-2)}(I+P^*)D_P\\
&=& D_PP^{*(n-1)}D_P+D_PP^{*n}D_P-\sum_{k=1}^{\infty}D_P(P^{k-1}P^{*(k-1)}-P^kP^{*k})P^{*(n-1)}(I+P^*)D_P\\
&=&D_PP_{\infty}^2P^{*(n-1)}D_P+D_PP_{\infty}^2P^{*n}D_P
\end{eqnarray*}
Lastly, we simplify $C_n, \ n\le -1.$ For $n\le -1,$
\begin{eqnarray*}
C_n &=& D_PP^{-n-1}D_{P^*}P-D_PP^{-n}D_{P^*}(D_{P^*}D_P-P)-\sum_{k=-\infty}^{n-2}D_PP^{-k-1}D_{P^*}^2P^{*(n-k-2)}(I+P^*)D_P\\
&=&D_PP^{-n+1}P^*D_P+D_PP^{-n+1}D_P-\sum_{k=0}^{\infty}D_PP^{1-n}(P^kP^{*k}-P^{k+1}P^{*(k+1)})(I+P^*)D_P\\
&=& D_PP^{1-n}P_{\infty}^2D_P+D_PP^{1-n}P_{\infty}^2P^*D_P
\end{eqnarray*}
Thus, Equation (\ref{eqn6}) holds.

\subsection{Proof of Equation (\ref{eqn7})}

\begin{eqnarray*}
\Delta_P(t)^2(F+e^{it}F^*) &=& [I-\Theta_P(e^{it})^*\Theta_P(e^{it})][F+e^{it}F^*]\\
&=& F+e^{it}F^*-\Theta_P(e^{it})^*[G^*+e^{it}G]\Theta_P(e^{it})\\
&&(\text{Since} \  \Theta_P(e^{it})[F+e^{it}F^*]=[G^*+e^{it}G]\Theta_P(e^{it}))\\
&=& F+e^{it}F^*- [-P^*+\sum_{n=0}^{\infty}e^{-i(n+1)t}D_PP^n D_{P^*}][G^*+e^{it}G)]\\
&&[-P+\sum_{n=0}^{\infty}e^{i(n+1)t}D_{P^*}P^{*n}D_P]\\
&=& F+e^{it}F^*- [-P^*+\sum_{n=-\infty}^{-1}e^{int}D_PP^{-n-1}D_{P^*}]\\
&&[-G^*P+e^{it}(G^*D_{P^*}D_P-GP)+\sum_{n=2}^{\infty}e^{int}(G^*D_{P^*}P^*+GD_{P^*})P^{*(n-2)}D_P]\\
&=& F+e^{it}F^*- [-P^*+\sum_{n=-\infty}^{-1}e^{int}D_PP^{-n-1}D_{P^*}]\\
&& [-G^*P+e^{it}(G^*D_{P^*}D_P-GP)+\sum_{n=2}^{\infty}e^{int}D_{P^*}S^*P^{*(n-2)}D_P].
\end{eqnarray*}
To get the last equality we used that $G$ being the fundamental
operator for $(S^*,P^*)$ satisfies
$D_{P^*}S^*=GD_{P^*}+G^*D_{P^*}P^*.$ Next we multiply the last two
terms, as we did to obtain (\ref{eqn6}), and collect coefficients
of $e^{int}.$
\begin{eqnarray*}
\Delta_P(t)^2(F+e^{it}F^*) &=& [F-P^*G^*P-D_PD_P^*(G^*D_{P^*}D_P-GP)\\
&-& \sum_{k=-\infty}^{-2}D_PP^{-k-1}D_{P^*}^2P^{*(-k-2)}S^*D_P]\\
&+&e^{it}[F^*-P^*GP+P^*G^*D_{P^*}D_P-\sum_{k=1}^{\infty}D_PP^{k-1}D_{P^*}^2P^{*(k-1)}S^*D_P]\\
&+&\sum_{n=2}^{\infty}e^{int}[P^*D_{P^*}S^*P^{*(n-2)}D_P-\sum_{k=1}^{\infty}D_PP^{k-1}D_{P^*}^2P^{*(n+k-2)}S^*D_P]\\
&+&\sum_{n=-\infty}^{-1}e^{int}[D_PP^{-n-1}D_{P^*}G^*P-D_PP^{-n}D_{P^*}(G^*D_{P^*}D_P-GP)\\
&-&\sum_{k=2-n}^{\infty}D_PP^{k-1}D_{P^*}^2P^{*(n+k-2)}S^*D_P]
\end{eqnarray*}
Next we simplify the coefficients of $e^{int}, \ n\in\bb Z.$ Let
$D_n$ denote the coefficient of $e^{int}.$ To simplify $D_n's$ we
shall be repeatedly using $D_P^2=I-P^*P, \ D_{P^*}^2=I-PP^*, \
PD_P=D_{P^*}P, \ P^*F= G^*P$ and $D_{P^*}GD_{P^*}=S^*-SP^*.$
\begin{eqnarray*}
D_0 &=& [F-P^*G^*P-D_PD_P^*(G^*D_{P^*}D_P-GP)\\
&-& \sum_{k=-\infty}^{-2}D_PP^{-k-1}D_{P^*}^2P^{*(-k-2)}S^*D_P]\\
&=& F-PP^*F+D_PD_P^*GP-D_PSD_P+D_PPS^*D_P\\
&-& \sum_{k=2}^{\infty}D_PP(P^{k-2}P^{*(k-2)}-P^{k-1}P^{*(k-1)})S^*D_P\\
&=& D_P^2F+D_PD_P^*GP-D_PSD_P+D_PPP_{\infty}^2S^*D_P.
\end{eqnarray*}
\begin{eqnarray*}
D_1&=& F^*-P^*GP+P^*G^*D_{P^*}D_P-\sum_{k=1}^{\infty}D_PP^{k-1}D_{P^*}^2P^{*(k-1)}S^*D_P\\
&=& F^*-F^*P^*P+P^*G^*D_{P^*}D_P-\sum_{k=1}^{\infty}D_P(P^{k-1}P^{*(k-1)}-P^kP^{*k})S^*D_P\\
&=& F^*D_P^2+P^*G^*D_{P^*}D_P-D_PS^*D_P+D_PP_{\infty}^2S^*D_P.
\end{eqnarray*}
For $n\ge 2,$
\begin{eqnarray*}
D_n&=&P^*D_{P^*}S^*P^{*(n-2)}D_P-\sum_{k=1}^{\infty}D_PP^{k-1}D_{P^*}^2P^{*(n+k-2)}S^*D_P\\
&=& P^*D_{P^*}S^*P^{*(n-2)}D_P-\sum_{k=1}^{\infty}D_P(P^{k-1}P^{*(k-1)}-P^kP^{*k})P^{*(n-1)}S^*D_P\\
&=& D_PP_{\infty}^2P^{*(n-1)}S^*D_P.
\end{eqnarray*}
Lastly, for $n\le -1,$
\begin{eqnarray*}
D_n&=& D_PP^{-n-1}D_{P^*}G^*P-D_PP^{-n}D_{P^*}(G^*D_{P^*}D_P-GP)\\
&-&\sum_{k=2-n}^{\infty}D_PP^{k-1}D_{P^*}^2P^{*(n+k-2)}S^*D_P\\
&=& D_PP^{-n-1}D_{P^*}G^*P-D_PP^{-n}(S^*-SP^*)^*D_P+D_PP^{-n}D_{P^*}GP\\
&-&\sum_{k=0}^{\infty}D_PP^{1-n}(P^kP^{*k}-P^{k+1}P^{*(k+1)})S^*D_P\\
&=&D_PP^{1-n}P_{\infty}^2S^*D_P.
\end{eqnarray*}
For each $n\in \bb Z,$ the expression for $D_n$ is same as
required in Equation (\ref{eqn7}). This proves Equation
(\ref{eqn7}).

\end{document}